\begin{document}
\title{Computing Super Matrix Invariants}
\author{Allan Berele\\ Department of Mathematics\\ Depaul University\\ Chicago, IL 60614}
\maketitle
\begin{abstract}In \cite{BReg} we generalized the first and second fundamental theorems of invariant
theory from the general linear group to the general linear Lie superalgebra.
In the current paper we generalize the computations of the the numerical invariants
(multiplicities and Poincar\'e series) to the superalgebra case.  The results involve either inner
products of symmetric functions in two sets of variables, or complex integrals.

Keywords:  general linear Lie superalgebra, generic trace rings, hook Schur functions,
invariants, trace identities,\end{abstract}
\newtheorem{thm}{Theorem}[section]
\newtheorem{cor}[thm]{Corollary}
\newtheorem{lem}[thm]{Lemma}
\newtheorem{coranddef}[thm]{Corollary and Definition}
\newtheorem*{conj}{Conjecture}
\newtheorem*{thm*}{Theorem}
\theoremstyle{definition}
\newtheorem{defn}[thm]{Definition}
\newtheorem{nota}[thm]{Notation}
\newtheorem{exa}[thm]{Example}
\theoremstyle{remark}
\newtheorem{rem}[thm]{Remark}
\newcommand{\lm}{\lambda}
\newcommand{\Ps}{Poincar\'e series}
\newcommand{\mkl}{M_{k,\ell}}
\newcommand{\glmn}{\gamma_{\mu,\nu}^\lm}
\newcommand{\hook}{k^2+\ell^2;2k\ell}
\newcommand{\bml}{\bar{m}_\lm}
\newcommand{\xy}{(1+x_iy_j^{-1})^{-1}(1+x_iy^{-1}_j)^{-1}}
\section*{Introduction}
Formanek's paper \cite{F}, based partly on Procesi's~\cite{P}, has been a major influence on my work.  One of the things Formanek does is describe a certain question which can be posed from five different points of view:  (1) Inner products of symmetric group characters; (2) plethysms of symmetric functions; (3) invariants of matrices; (4) trace rings of generic matrices; and (5) complex integrals.  Here is a brief description of each:
\begin{description}
\item [Problem 1] Let $\Lambda_k(n)$ be the partitions of~$n$ into at most~$k$ parts, let $\Lambda_k=\cup_n \Lambda_k(n)$, and let $\chi^\lambda$ denote the character of the symmetric group $S_n$ on the partition~$\lambda$.  Then the sum $\sum_{\lm\in\Lambda_k(n)}\chi^\lm\otimes\chi^\lm$ decomposes into irreducible characters as $\sum m_\lm\chi^\lm$ and we would like to evaluate the multiplicities $m_\lm$.
\item [Problem 2] Let $S_\lm$ denote the Schur function on the partition~$\lm$.  Let $X$ denote the set of variables $\{x_1,\ldots,x_k\}$, and let $S_\lm(XX^{-1})$ denote $S_\lm$ evaluated on all $xy^{-1}$, $x,y\in X$ (including $n$ 1's).  Then $m_\lm$ also equals $\langle S_\lm(XX^{-1}),1\rangle$, where the inner product is the natural inner product on symmetric functions as in~I.4 of~\cite{mac}.
\item [Problem 3] Consider functions $\phi: M_k(F)^n\rightarrow F$ which are polynomial in the entries and such that $$\phi(gA_1g^{-1},\ldots,gA_ng^{-1})=\phi(A_1,\ldots,A_n)$$ for all $A_1,\ldots,A_n\in M_k(F)$ and all $g\in GL_k(F)$.  Such a function is said to be invariant under conjugation from~$GL_k(F)$.  They form a ring with an $n$-fold grading and determine a Poincar\'e series $P(k,n)$.  Since $P(k,n)$ is a symmetric function it can be expanded into Schur functions as $$P(k,n)=\sum m_\lm S_\lm(t_1,\ldots,t_n)$$ where the $m_\lm$ are the heros of Problems 1 and 2.
\item [Problem 4] Let $X_\alpha$ be the generic $k\times k$ matrix with entries $X_\alpha =(x_{ij}^{(\alpha)})$, and let $R(k,n)$ be the algebra generated by $X_1,\ldots,X_n$.  Let $\bar{C}(k,n)$ be the commutative algebra generated by traces of elements of $R(k,n)$.  Then $\bar{C}(k,n)$ has an $n$-fold grading and so a \Ps\ in $n$ variables.  This series equals $P(k,n)$ from Problem~3.
\item [Problem 5] The function $P(k,n)$ can be evaluated as the following complex integral:  $$(2\pi i)^{-k}(k!)^{-1}\oint_T \frac{\prod_{i\ne j}(1-{z_i}{z_j}^{-1})}{\prod_{i,j} \prod_\alpha (1-z_i z_j^{-1} t_\alpha)}\frac{dz_1}{z_1}\wedge\cdots\wedge\frac{dz_k}{z_k}$$
where $i,j=1,\ldots,k$ and $\alpha=1,\ldots,n$, and where $T$ is the torus $|z_i|=1$.
\end{description}
Having five ways to look at the same object is useful for proving things about it.  Properties 3 and~5 are useful for actual computations, but also have theoretical consequences.  Using ~(3) it is easy to show that $m_\lm=0$ if $\lm\notin \Lambda_{k^2}$, and if $\lm\in\Lambda_{k^2}$ and $\mu=(\lm_1+a,\ldots,\lm_{k^2}+a)$, then $m_\lm=m_\mu$.  Using~(5) one can show that $P(n,k)$ is a rational function, that it can be written with denominator a product of terms of the form $(1-u)$, where $u$ is a monic monomial of degree at most~$k$, and that $P(n,k)$ satisfies the functional equation
\begin{equation} P(t_1^{-1},\ldots,t_n^{-1})=(-1)^g (t_1\cdots t_n)^{k^2}P(t_1,\ldots,t_k),
\label{eq:1}\end{equation}
where $g=(n-1)k^2+1$.

This theory has a $\mathbb{Z}_2$-graded analogue which we sketch briefly, see~\cite{B88}.  In this theory there are analogues of (1), (3) and~(4), but not of (2) and~(5).  Here are the analogues:
\begin{description}
\item [Problem 1a] Let $H(k,\ell;n)$ be the set of partitions of~$n$ in which at most $k$ parts are greater than or equal to~$\ell$, and let $H(k,\ell)=\cup_n H(k,\ell;n)$.  Using the standard notation for partitions $\lm=(\lm_1,\lm_2,\ldots)$ with $\lm_1\ge\lm_2\ge\cdots$, we have $$\lm\in H(k,\ell) \Longleftrightarrow \lm_{k+1}\le\ell.$$
The character of interest is $\sum\chi^\lm\otimes\chi^\lm$ summed over $\lm\in H(k,\ell;n)$ and we define the multiplicities $m_\lm$ by $$\sum_{\lm\in H(k,\ell;n)}\chi^\lm\otimes\chi^\lm=\sum m_\lm \chi^\lm.$$

\item [Problem 3a] Let $E$ be an infinite dimensional Grassmann algebra.  It has a natural $\mathbb{Z}_2$ grading.  We give the set $\{1,\ldots,k+\ell\}$ a $\mathbb{Z}_2$ grading via $$\deg(i)=\begin{cases}\bar{0}&\text{if }1\le i\le k\\ \bar{1}&\text{if }k+1\le i\le k+\ell
\end{cases}$$
and then grade the pairs $\{(i,j)\}_{i,j=1}^{k+\ell}$ via $\deg(i,j)=\deg(i)+\deg(j)$.

The algebra $\mkl$ is a subalgebra of $M_{k+\ell}(E)$ defined as the set of matrices $(a_{ij})$ such that for each $(i,j)$ the entry $a_{ij}\in E$ is homogeneous and has the same $\mathbb{Z}_2$ degree as $(i,j)$.
 Then $\mkl$ is an algebra and it has a non-degenerate trace with values in $E_0$ given by $$tr(a_{ij})=\sum (-1)^{\deg(i)}a_{ii}.$$  The group of units of $\mkl$  is denoted $PL(k,\ell)$ and is called the general linear Lie superalgebra.

Finally, we consider functions $\phi:\mkl^n\rightarrow E$, polynomial in the entries and invariant under conjugation from $PL(k,\ell)$.  These form an $n$-fold graded algebra with \Ps\ $\sum m_\lm S_\lm(t_1,\ldots,t_n)$, the same $m_\lm$ as in problem 1a.

\item [Problem 4a] Let $x_{ij}^{(\alpha)}$ be commuting indeterminants and let $e_{ij}^{(\alpha)}$ be anticommuting indeterminants, so that the algebra $S=F[x_{ij}^{(\alpha)},e_{ij}^{(\alpha)}]$ will be a free supercommutative algebra.  The generic matrix $A_\alpha$ will be the $(k+\ell)\times(k+\ell)$ matrix with $(i,j)$ entry equal to $x_{ij}^{(\alpha)}$ or $e_{ij}^{(\alpha)}$, depending on whether $\deg(i,j)$ equals $\bar{0}$ or $\bar{1}$, respectively.  Then the algebra $F[A_1,\ldots,A_n]$ will be the generic algebra for $\mkl$.  It has a trace function with image in~$S$, and we let $\bar{C}(k,\ell;n)$ be the algebra generated by the image of the trace map.  This ring has an $n$-fold grading and a \Ps\ in $n$ variables, the same series $\sum m_\lm S_\lm$ from 3a.
\end{description}
It is useful to push this last construction one step farther.  Let $B_\alpha$ be the $(k+\ell)\times(k+\ell)$ matrix with $(i,j)$ equal to $e_{ij}^{(\alpha)}$ if $\deg(i,j)$ is $\bar{0}$ and $x_{ij}^{(\alpha)}$ if  $\deg(i,j)=\bar{1}$, the opposite of the definition of $A_\alpha$.  Let $R(k,\ell;n,m)$ be the algebra generated by $A_1,\ldots,A_n$ and $B_1,\ldots,B_m$.  (For the reader familiar with the theory of magnums from~\cite{B85}, this is the magnum of $\mkl$.)  It has a supertrace function to $S$ and we let $\bar{C}(k,\ell;n,m)$ be the algebra generated by the traces.  $\bar{C}(k,\ell;n,m)$ has a $(k+\ell)\times(k+\ell)$ fold grading and a \Ps\ which can be expressed in terms of hook Schur functions as $$P(k,\ell;n,m)=\sum m_\lm HS_\lm(t_1,\ldots,t_n;u_1,\ldots,u_m),$$ where the $m_\lm$ are as in~1a.  See~\cite{BReg} for the theory of hook Schur functions.

Such a construction would also be possible in the matrix case (see~\cite{Bpre}), but it would be less useful.  A basic property of Schur functions is that $S_\lambda(t_1,\ldots,t_k)$ is non-zero precisely when $\lm\in\Lambda_k$.  And, in the matrix case, $m_\lm=0$ if $\lm\notin\Lambda_{k^2}$.  This means that we can reconstruct all the $m_\lm$ from the \Ps\ $P(k,n)$ as long as $n\ge k^2$.  In the  case of $M_{k,\ell}$, it is known that $HS_\lm(x_1,\ldots,x_n;y_1,\ldots,y_m)$ is non-zero if and only if $\lm\in H(n,m)$ and that $m_\lm\ne0$ only if $\lm\in H(k^2+\ell^2;2k\ell)$.  It follows that we get full information about the non-zero $m_\lm$ if we know the \Ps\ $H(k,\ell;n,m)$ for
 some $n\ge k^2+\ell^2$, $m\ge 2k\ell$.

 Our main goal in this paper is to present partial generalizations of problems~2 and~5 to the graded case.  Let $X$ denote the set of $k$ variables $\{x_1,\ldots,x_k\}$ and let $Y$ denote the set of $\ell$ variables $\{y_1,\ldots,y_\ell\}$.Then for certain $\lm$ which we call ``large'' and which include most of $H(k^2+\ell^2;2k\ell)$ we prove
 \begin{equation}
 m_\lm=\left\langle \prod_{ij}\xy HS_\lm(XX^{-1},YY^{-1};XY^{-1},YX^{-1}),1\right\rangle \label{eq:2}
 \end{equation}
 where the inner product is the inner product on functions symmetric on two sets of variables.  We will define it explicitly in section~1.

 Equation \eqref{eq:2} has an application in the case of typical $\lm$.  A partition in $H(a;b)$ but not in any strictly smaller hook is called typical, and the set of such is denoted $H'(a;b)$.  Such a partition can be thought of as being made up of three parts:  The $a\times b$ rectangle, a partition $\alpha(\lm)\in\Lambda_a$ to the right of the rectangle, and a partition $\beta(\lm)\in\Lambda_b$ whose conjugate lies below the rectangle.  Hopefully Figure~1 makes this clear, but if not we add that if $\lm=(\lm_1,\lm_2,\ldots)$ then $\alpha(\lm)=(\lm_1-b,\ldots,\lm_a-b)$ and $\beta(\lm)$ is the conjugate of $(\lm_{a+1},\lm_{a+2},\ldots)$.  The importance of typical partitions for our purposes lies in this factorization theorem for hook Schur functions from~\cite{BReg}.  Note that the number of  $x$'s and $y$'s in the theorem equal the dimensions of the hook.

 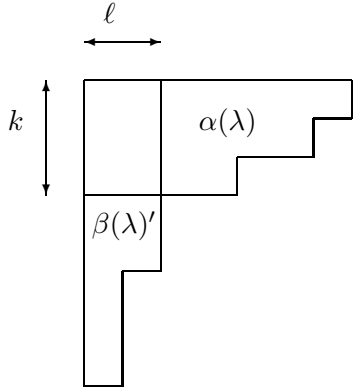
\begin{figure}\label{fig:1}
\setlength{\unitlength}{.2in}
\begin{center}
\begin{picture}(10,10)
\put(3,0){\line(0,1){8}}\put(4,0){\line(0,1){3}}\put(5,3){\line(0,1){5}}
\put(7,5){\line(0,1){1}}\put(9,6){\line(0,1){1}}\put(10,7){\line(0,1){1}}
\put(3,0){\line(1,0){1}}\put(3,5){\line(1,0){4}}\put(7,6){\line(1,0){2}}
\put(9,7){\line(1,0){1}}\put(3,8){\line(1,0){7}}\put(4,3){\line(1,0){1}}
\put(2,6){\vector(0,-1){1}}\put(2,6){\vector(0,1){2}}
\put(3,9){\vector(1,0){2}} \put(4,9){\vector(-1,0){1}}
\put(1,6.7){$k$} \put(3.5,9.5){$\ell$} \put(3.2,4){$\beta(\lm)'$}
\put(6,6.7){$\alpha(\lm)$}

\end{picture}\end{center}\caption{Definition of $\alpha(\lm)$ and $\beta(\lm)$}
\end{figure}

 \begin{thm}[The Factorization Theorem] If $\lm\in H'(a,b)$ with $\alpha=\alpha(\lm)$ and $\beta=\beta(\lm)$ then
 $$HS_\lm(x_1,\ldots,x_a;y_1,\ldots,y_b)=\prod(x_i+y_j) S_\alpha(x_1,\ldots,x_a) S_\beta(y_1,\ldots,y_b)$$
 \label{th:1}
 \end{thm}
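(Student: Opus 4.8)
The plan is to read the factorization off the bialternant form of $HS_\lm$. Conceptually, for any $\lm$ in the $(a,b)$-hook the Sergeev--Pragacz formula writes $HS_\lm$ as a quotient, by the Weyl denominator $\Delta(x)\Delta(y)=\prod_{i<i'}(x_i-x_{i'})\prod_{j<j'}(y_j-y_{j'})$, of an alternating sum over $S_a\times S_b$ whose summand is $\prod_{(i,j)\in R_\lm}(x_i+y_j)$ times monomials encoding the arm and leg of $\lm$, where $R_\lm=\{(i,j)\in\lm:i\le a,\ j\le b\}$ is the part of the diagram of $\lm$ lying in the first $a$ rows and first $b$ columns. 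Now $\lm\in H'(a,b)$ means precisely that $\lm$ contains the $a\times b$ rectangle (and that its cells in columns past $b$ all lie in rows $\le a$), so $R_\lm$ is the full $a\times b$ rectangle; then $\prod_{(i,j)\in R_\lm}(x_i+y_j)=\prod_{i=1}^a\prod_{j=1}^b(x_i+y_j)$ is invariant under all of $S_a\times S_b$, hence comes out of the alternating sum, which thereupon factors into an alternating sum over $S_a$ in the $x$'s times one over $S_b$ in the $y$'s. Dividing by $\Delta(x)$ and $\Delta(y)$, and checking from the picture of a typical partition that the arm and leg monomials are $\prod_i x_i^{\alpha_i+a-i}$ and $\prod_j y_j^{\beta_j+b-j}$ with $\alpha=\alpha(\lm)$, $\beta=\beta(\lm)$, these two pieces become the classical bialternants $S_\alpha(x_1,\ldots,x_a)$ and $S_\beta(y_1,\ldots,y_b)$, and the theorem drops out.

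To keep this self-contained I would run the same argument from the Jacobi--Trudi identity $HS_\lm=\det\big(h_{\lm_i-i+j}(x/y)\big)_{1\le i,j\le\ell(\lm)}$ of \cite{BReg}, where $\sum_m h_m(x/y)t^m=\prod_{j=1}^b(1+y_jt)\big/\prod_{i=1}^a(1-x_it)$. The convolution $h_m(x/y)=\sum_{s\ge0}e_s(y)\,h_{m-s}(x)$, with $e_s(y)=0$ for $s>b$, exhibits the Jacobi--Trudi matrix as a product, to which I apply the Cauchy--Binet theorem. For typical $\lm$ the rows split into $a$ ``arm'' rows ($\lm_i=b+\alpha_i$) and $\ell(\beta')$ ``leg'' rows ($\lm_{a+p}=\beta'_p$); requiring the $x$-side minor to be a non-vanishing Schur function in only $a$ variables pins down the ``leg'' columns, which forces the corresponding $e(y)$-minor to be block triangular with a diagonal block equal, by the dual Jacobi--Trudi identity applied to $\beta'$, to $S_\beta(y)$. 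What is left is $S_\beta(y)$ times $HS_\delta(x;y)$ for the typical partition $\delta$ with empty leg, and one finishes by the same argument in that case together with the rectangle evaluation $HS_{(b^a)}(x;y)=\prod_{i,j}(x_i+y_j)$ (immediate from the combinatorial definition of hook Schur functions, or from a one-step bialternant expansion).

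I expect the main obstacle to be the Cauchy--Binet bookkeeping of the second paragraph: correctly determining which intermediate index sets contribute and confirming that the surviving $e(y)$-block is exactly $S_\beta(y)$; in the first approach one also has to be careful to state the Sergeev--Pragacz formula precisely and to match its arm/leg conventions with $\alpha(\lm),\beta(\lm)$. Either way the mathematical content is the single observation that typicality of $\lm$ forces $\prod(x_i+y_j)$ to be symmetric in each alphabet separately, so that it factors out of the relevant alternating sum.
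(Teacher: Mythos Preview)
The paper does not give a proof of this theorem at all: it is quoted from \cite{BReg} as an input and used throughout, so there is no in-paper argument to compare your proposal against.

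That said, both of your sketched routes are mathematically sound ways to establish the result. Your first approach via the Sergeev--Pragacz bialternant is the cleanest, and you have correctly isolated the one essential point: typicality of $\lambda$ forces $R_\lambda$ to be the full $a\times b$ rectangle, so $\prod_{(i,j)\in R_\lambda}(x_i+y_j)=\prod_{i,j}(x_i+y_j)$ is separately symmetric in the $x$'s and the $y$'s and factors out of the antisymmetrization, leaving the product of two ordinary bialternants. The second route through Jacobi--Trudi and Cauchy--Binet also works, and your anticipation of the difficulty is accurate---the block-triangularity of the $e(y)$-minor and the identification of its determinant with $S_\beta(y)$ require careful index tracking. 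For comparison, the original proof in \cite{BReg} does not use either determinantal formula; it proceeds from the expansion $HS_\lambda(x;y)=\sum_{\mu,\nu}c^{\lambda}_{\mu\nu}S_\mu(x)S_{\nu'}(y)$ (equivalently, the hook-tableau definition) and argues combinatorially that for typical $\lambda$ the only contributing pairs $(\mu,\nu)$ are those with $\mu\supseteq(b^a)$, after which one peels off the rectangle. Your Sergeev--Pragacz argument is shorter and more conceptual; the original has the advantage of staying entirely within the tableau framework in which \cite{BReg} develops hook Schur functions.
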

 Combining the factorization theorem with \eqref{eq:2} we get the following.
 \begin{thm} Given $\lm,\mu\in H'(k^2+\ell^2;2k\ell)$ with $\alpha(\lm)=\alpha(\mu)+(a^{k^2+\ell^2})$ for some~$a$ and $\beta(\lm)=\beta(\mu)+(b^{2k\ell})$ for some $b$, then $m_\lm=m_\mu$.  (See Figure 2).\label{th:0.2}
 \end{thm}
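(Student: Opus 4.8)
The plan is to combine equation~\eqref{eq:2} with the Factorization Theorem and then reduce to the classical fact that adding a full rectangular column of height~$N$ to a partition multiplies its Schur function in $N$ variables by the product of those variables.

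First I would apply \eqref{eq:2} to both $\lm$ and $\mu$. Writing $Q(X,Y)=\prod_{ij}\xy$ and noting that $Q$, together with the product $\Pi$ introduced below, depends only on $X$ and $Y$ and not on the partition, we get that $m_\lm$ and $m_\mu$ are inner products with the same ``ambient'' factors, differing only in the hook Schur function. Since $\lm$ and $\mu$ are typical in $H'(k^2+\ell^2;2k\ell)$, Theorem~\ref{th:1}, applied with $a=k^2+\ell^2$ and $b=2k\ell$, factors
\[
HS_\lm(XX^{-1},YY^{-1};XY^{-1},YX^{-1})=\Pi\cdot S_{\alpha(\lm)}(XX^{-1},YY^{-1})\cdot S_{\beta(\lm)}(XY^{-1},YX^{-1}),
\]
where $\Pi=\prod(z+w)$, with $z$ ranging over the $k^2+\ell^2$ variables $\{x_ix_j^{-1}\}\cup\{y_iy_j^{-1}\}$ and $w$ over the $2k\ell$ variables $\{x_iy_j^{-1}\}\cup\{y_ix_j^{-1}\}$; the same identity holds with $\mu$ in place of $\lm$, and $\Pi$ is the same in both.

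The heart of the argument is the observation that the product of all $k^2+\ell^2$ variables $\{x_ix_j^{-1}\}\cup\{y_iy_j^{-1}\}$ equals~$1$, and likewise the product of all $2k\ell$ variables $\{x_iy_j^{-1}\}\cup\{y_ix_j^{-1}\}$ equals~$1$ — a one-line computation in each case. Together with the elementary determinantal identity $S_{\gamma+(c^N)}(z_1,\dots,z_N)=(z_1\cdots z_N)^c\,S_\gamma(z_1,\dots,z_N)$, the hypotheses $\alpha(\lm)=\alpha(\mu)+(a^{k^2+\ell^2})$ and $\beta(\lm)=\beta(\mu)+(b^{2k\ell})$ then give
\[
S_{\alpha(\lm)}(XX^{-1},YY^{-1})=S_{\alpha(\mu)}(XX^{-1},YY^{-1}),\qquad S_{\beta(\lm)}(XY^{-1},YX^{-1})=S_{\beta(\mu)}(XY^{-1},YX^{-1}).
\]
Hence the two hook Schur factors coincide, the arguments of the two inner products in \eqref{eq:2} are identical, and $m_\lm=m_\mu$.

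I expect the only point requiring care to be the range of validity of \eqref{eq:2}: one must check that both $\lm$ and $\mu$ — and not merely whichever of the two is the ``larger'' — are among the ``large'' partitions for which \eqref{eq:2} has been established, or else reduce the general case to that one. The rest — the factorization, the vanishing of the two variable products, and the rectangle identity for Schur polynomials — is routine.
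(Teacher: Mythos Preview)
Your proposal is correct and is essentially the same argument as the paper's own proof: both combine \eqref{eq:2} with the Factorization Theorem and then use $S_{\gamma+(c^N)}(z_1,\dots,z_N)=(z_1\cdots z_N)^c S_\gamma(z_1,\dots,z_N)$ together with the observation that the products of the variables in $XX^{-1}\cup YY^{-1}$ and in $XY^{-1}\cup YX^{-1}$ are each equal to~$1$. Your residual worry about the applicability of \eqref{eq:2} is harmless here, since the paper records (just before Lemma~\ref{lem:1.2}) that typical partitions are automatically large, and both $\lm$ and $\mu$ are assumed typical.
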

 \begin{proof} $S_\alpha(\lm)(XX^{-1}, YY^{-1}) =$
 \begin{align*}
 (\prod_{z\in XX^{-1},YY^{-1}}z)^a  S_{\alpha(\mu)(XX^{-1}, YY^{-1}}) &\\
 1^a\cdot S_{\alpha(\mu)(XX^{-1}, YY^{-1}})&.
 \end{align*}
 And by the same token $S_{\beta(\lm)}(XY^{-1},YX^{-1})= S_{\beta(\mu)}(XY^{-1},YX^{-1})$.
 \end{proof}
  \begin{figure}\label{fig:2}
\setlength{\unitlength}{.2in}
\begin{center}
\begin{picture}(20,8)
\put(2,7){\line(1,0){7}}\put(12,7){\line(1,0){8}}\put(2,5){\line(1,0){4}}
\put(12,5){\line(1,0){5}}\put(6,6){\line(1,0){3}}\put(17,6){\line(1,0){3}}
\put(12,3){\line(1,0){2}}\put(3,3){\line(1,0){1}}\put(2,2){\line(1,0){1}}
\put(12,0){\line(1,0){1}}\put(13,1){\line(1,0){1}}
\put(2,2){\line(0,1){5}} \put(3,2){\line(0,1){1}}\put(4,3){\line(0,1){4}}
\put(6,5){\line(0,1){1}}\put(9,6){\line(0,1){1}}\put(12,0){\line(0,1){7}}
\put(13,0){\line(0,1){1}}\put(14,1){\line(0,1){6}}\put(17,5){\line(0,1){1}}
\put(20,6){\line(0,1){1}}\put(15,5){\line(0,1){2}}
\put(0,5){$\mu=$} \put(10,5){$\lm=$}
\put(14,7.5){\vector(1,0){1}} \put(15,7.5){\vector(-1,0){1}}
\put(14.5,4){\vector(0,1){1}}\put(14.5,4){\vector(0,-1){1}}
\put(14.5,8){$a$} \put(15,4){$b$}
\end{picture}\end{center}\caption{Theorem \ref{th:0.2}}
\end{figure}
 Turning to \Ps, since we do not know all of the $m_\lm$ we cannot hope to capture the full $\sum m_\lm S_\lm$ or $\sum m_\lm HS_\lm$, even in small numbers of variables.  There are two related infinite series we can express as integrals and derive information about.  For the first, let $m_\lm'$ be the right hand side of~(\ref{eq:2}), so $m_\lm=m_\lm'$ for large $\lm$.  Then we may define $$P'(k,\ell;a,b)=\sum m_\lm' HS_\lm(t_1,\ldots,t_a;u_1,\ldots,u_b)$$
 summed over all $\lm\in H(a,b)$.  Or instead, we may restrict  to typical partitions and define
 $$T(k,\ell;a,b)=\sum_{\lm\text{ typical}} m_\lm S_{\alpha(\lm)}(t_1,\ldots,t_a)S_{\beta(\lm)}(y_1,\ldots,y_b).$$
 Using (\ref{eq:2}) we can write each of these series as a complex integral over a torus.  The integrals are too complex to embellish an introduction, and perhaps too complex for much actual computation.  However, at least in the case of $T$ we can use the integral to prove that the
 $T(k,\ell;a,b)$ is the Taylor series of a rational function, we can
 describe what type of terms occur in the denominator, and we can prove a functional equation similar to~(\ref{eq:1}).

 Ideally we would like information about all of the $m_\lm$, not just for large or typical $\lm$.  If
 $$P(k,\ell;a,b)=\sum m_\lm HS_\lm(t_1,\ldots,t_a;u_1,\ldots,u_b),$$
 then it is still open whether $P(k,\ell;a,b)$ is a rational function, what its denominator looks like if it is, and
whether it satisfies a functional equation along the lines
of~\eqref{eq:1}.  See Corollary~\ref{cor:4.3} for a case in which it does not.

In  the  classical case described by Formanek one is also interested
in the character $$(\sum_{\lm\in \Lambda_k(n+1)} \chi^\lm\otimes
\chi^\lm)\downarrow =\sum \bar{m}_\lm \chi^\lm,$$ where the arrow indicates inducing down from $S_{n+1}$ to $S_n$.  Each of problems~1 through~5 have analogues in
this case.  The analogue of the invariant theory problem would concern the invariant maps $M_k(F)^n\rightarrow M_k(F)$. Generalizing to the $\mathbb{Z}_2$-graded case we would define  $$(\sum_{\lm\in H(k,\ell;n+1)} \chi^\lm\otimes
\chi^\lm)\downarrow =\sum \bar{m}_\lm \chi^\lm$$ and study the $\bar{m}_\lm$.  It turns out that analogues of problems~3 and~4 are known in this case, see~\cite{B88}, and that we can now develop analogues of problems~2 and~5, just like we did for~$m_\lm$.  The theory is very similar.  The formula for $\bar{m}_\lm$ for large enough~$\lm$ is the same as equation~\ref{eq:2} with an extra factor of $\sum x_i x_j^{-1}+\sum y_iy_j^{-1}$ and the same factor multiplies the integrands in the formulas for the power series $\bar{T}(k,\ell;a,b)$ and $\bar{P}'(k,\ell;a,b)$.
\section{Computation of Multiplicities}
\begin{nota} Given partitions $\mu,\nu\vdash n$ of the same~$n$, we define the coefficients $\glmn$ via the equation $\chi^\mu\otimes\chi^\nu=\sum_{\lm\vdash n} \glmn \chi^\lm$.  Note that from Problem~1a this implies \begin{equation}m_\lm=\sum\{\gamma_{\mu,\mu}^\lm\vert \mu\in H(k,\ell)\}.\label{eq:1.05}\end{equation}\label{not:1.1}
\end{nota}
We set the stage for the computation of $m_\lm$ by quoting two theorems
\begin{thm}[Berele-Regev~\cite{BReg}] If $\mu\in H(k_1,\ell_1)$ and $\nu\in H(k_2,\ell_2)$, then $\glmn=0$ unless $\lm\in H(k_1k_2+\ell_1\ell_2, k_1\ell_2+\ell_1 k_2)$\label{th:1.2}
\end{thm}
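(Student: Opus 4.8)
The plan is to deduce this from a classical substitution formula of Littlewood for Kronecker coefficients, transported to the super setting by the ring homomorphism that produces hook Schur functions, and then to read off the conclusion from the vanishing criterion for $HS_\mu$ quoted in the Introduction (namely that $HS_\mu(x_1,\ldots,x_a;y_1,\ldots,y_b)$ is nonzero exactly when $\mu\in H(a,b)$).

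First I would recall two standard facts. (i) If $X,Y$ are alphabets and $XY$ denotes the alphabet of all products $x_iy_j$, then $S_\lm(XY)=\sum_{\mu,\nu}\glmn\,S_\mu(X)S_\nu(Y)$; this is a classical identity (see \cite{mac}), reflecting that passing to a product alphabet is the operation $p_n\mapsto p_n(X)p_n(Y)$ on power sums, which is exactly what expresses the internal (Kronecker) product on the Schur basis. (ii) The hook Schur function is the image of $S_\lm$ under the ring homomorphism $\sigma_{X;Y}$ on symmetric functions determined by $p_n\mapsto p_n(X)+(-1)^{n-1}p_n(Y)$; that is, $HS_\lm(X;Y)=\sigma_{X;Y}(S_\lm)$ (see \cite{BReg}).

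The key step is to apply $\sigma_{X_1;Y_1}$ to the alphabet $X$ and $\sigma_{X_2;Y_2}$ to the alphabet $Y$ in the formula of~(i), where $X_1,Y_1,X_2,Y_2$ are disjoint alphabets of sizes $k_1,\ell_1,k_2,\ell_2$ respectively. The right side becomes $\sum_{\mu,\nu}\glmn\,HS_\mu(X_1;Y_1)\,HS_\nu(X_2;Y_2)$. For the left side one multiplies out the power sums:
\begin{multline*}
\bigl(p_n(X_1)+(-1)^{n-1}p_n(Y_1)\bigr)\bigl(p_n(X_2)+(-1)^{n-1}p_n(Y_2)\bigr)\\
=p_n(X_1X_2)+p_n(Y_1Y_2)+(-1)^{n-1}\bigl(p_n(X_1Y_2)+p_n(Y_1X_2)\bigr),
\end{multline*}
using $(-1)^{2n-2}=1$ and $p_n(AB)=p_n(A)p_n(B)$. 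The right side here is precisely $\sigma_{A;B}$ applied to $p_n$, with \emph{even} alphabet $A=X_1X_2\cup Y_1Y_2$, of size $k_1k_2+\ell_1\ell_2$, and \emph{odd} alphabet $B=X_1Y_2\cup Y_1X_2$, of size $k_1\ell_2+\ell_1k_2$. Hence the left side of the specialized formula is $HS_\lm$ in those variables, and we obtain
\[
HS_\lm\bigl(X_1X_2,\,Y_1Y_2;\,X_1Y_2,\,Y_1X_2\bigr)=\sum_{\mu,\nu}\glmn\,HS_\mu(X_1;Y_1)\,HS_\nu(X_2;Y_2).
\]

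To finish, suppose $\lm\notin H(k_1k_2+\ell_1\ell_2,\,k_1\ell_2+\ell_1k_2)$. By the vanishing criterion the hook Schur function in $k_1k_2+\ell_1\ell_2$ even and $k_1\ell_2+\ell_1k_2$ odd variables is the zero polynomial, so after the monomial substitution the left side above is $0$. On the right, $HS_\mu(X_1;Y_1)=0$ unless $\mu\in H(k_1,\ell_1)$ and $HS_\nu(X_2;Y_2)=0$ unless $\nu\in H(k_2,\ell_2)$, and for a surviving pair the polynomial $HS_\mu(X_1;Y_1)HS_\nu(X_2;Y_2)$ is nonzero with nonnegative coefficients, in disjoint sets of variables, while the Kronecker coefficients $\glmn$ are nonnegative. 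A vanishing sum of nonnegatively weighted polynomials with nonnegative coefficients forces every weight to vanish, so $\glmn=0$ for all $\mu\in H(k_1,\ell_1)$ and $\nu\in H(k_2,\ell_2)$, in particular for the given $\mu,\nu$. (One could instead invoke linear independence of the $HS_\mu(X_1;Y_1)$, $\mu\in H(k_1,\ell_1)$, as characters of pairwise inequivalent irreducibles of $PL(k_1,\ell_1)$.) The one place needing care is the bookkeeping in the key step—checking that the two super-specializations send the classical product alphabet to exactly the split alphabet $\bigl(X_1X_2\cup Y_1Y_2;\ X_1Y_2\cup Y_1X_2\bigr)$ with the stated $(-1)^{n-1}$ pattern, which is the computation displayed above; the other ingredients, Littlewood's identity, the vanishing criterion and positivity of hook Schur functions, are standard. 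A more combinatorial route via a signed RSK correspondence on super-semistandard tableaux is possible but longer.
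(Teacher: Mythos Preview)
The paper does not prove this theorem; it is quoted from \cite{BReg} as one of two preliminary results, the other being Rosas's Theorem~\ref{th:1.5}. Your argument is correct. The identity you establish in your key step,
\[
HS_\lm(X_1X_2,\,Y_1Y_2;\,X_1Y_2,\,Y_1X_2)=\sum_{\mu,\nu}\glmn\,HS_\mu(X_1;Y_1)\,HS_\nu(X_2;Y_2),
\]
is in fact exactly Theorem~\ref{th:1.5} (Rosas), which the paper cites separately and without proof; you have supplied a short plethystic derivation of it en route, and then combined it with the vanishing criterion for $HS_\lm$ and a nonnegativity (equivalently, linear-independence) argument on the surviving products $HS_\mu(X_1;Y_1)HS_\nu(X_2;Y_2)$ to force $\glmn=0$. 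One small point worth making explicit: the vanishing criterion applies to $HS_\lm$ in \emph{independent} variables, and the alphabet $X_1X_2\cup Y_1Y_2$ consists of monomials; but since $HS_\lm$ vanishes identically as a polynomial in $k_1k_2+\ell_1\ell_2$ even and $k_1\ell_2+\ell_1k_2$ odd variables when $\lm$ lies outside the hook, any specialization of those variables---in particular to the product monomials---also vanishes. You do say this, and it is the only place a reader might pause.

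Since the present paper gives no proof, there is nothing to compare against here. The original argument in \cite{BReg} predates \cite{Ro} and so cannot have gone through Rosas's identity; your route is a clean alternative that packages both quoted theorems into one computation.
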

\begin{defn} Given $k,\ell$, we say that a partition $\lm$ is large if $\lm\in H(\hook)$ but $\lm\notin H(a^2+b^2;2ab)$ for any $a\le k$, $b\le\ell$ and at least one  of the inequalities strict.
Note that  typical partitions are all large.
\end{defn}
Using this definition we get this  corollary of Theorem~\ref{th:1.2}
\begin{lem} If $\lm$ is large and $\mu,\nu\in H(k,\ell)$, then $\glmn\ne0$ only if $\mu$ and $\nu$ are typical.\label{lem:1.2}
\end{lem}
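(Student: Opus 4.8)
The plan is to reduce the statement to Theorem~\ref{th:1.2} by a contrapositive argument: if either $\mu$ or $\nu$ fails to be typical, I want to show that any $\lm$ with $\glmn\ne 0$ must lie in a hook strictly smaller than $H(\hook)$, which contradicts largeness. First I would recall that $\mu\in H(k,\ell)$ means $\mu$ fits in the hook shape $H(k,\ell)$, and that being \emph{non-typical} means $\mu$ actually fits in a strictly smaller hook, i.e.\ $\mu\in H(k,\ell-1)$ or $\mu\in H(k-1,\ell)$ (since the only way to be inside $H(k,\ell)$ but not use it fully is to have $\mu_{k+1}<\ell$, giving $\mu\in H(k,\ell-1)$, or $\mu_k\le\ell$ with $\mu$ having at most $k-1$ rows exceeding $\ell$ in a way that lets it sit in $H(k-1,\ell)$). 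So WLOG assume $\mu\in H(k-1,\ell)$ and $\nu\in H(k,\ell)$ — the other three combinations ($\mu\in H(k,\ell-1)$, and the symmetric cases with $\nu$) are handled identically.

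Next I would apply Theorem~\ref{th:1.2} with $(k_1,\ell_1)=(k-1,\ell)$ and $(k_2,\ell_2)=(k,\ell)$: this forces $\glmn=0$ unless
\[
\lm\in H\bigl((k-1)k+\ell^2;\ (k-1)\ell+\ell k\bigr)=H(k^2-k+\ell^2;\ 2k\ell-\ell).
\]
Since $k^2-k+\ell^2\le k^2+\ell^2$ and $2k\ell-\ell\le 2k\ell$ with at least one inequality strict (strict in the first if $k\ge 1$, strict in the second if $\ell\ge 1$; at least one holds in any nontrivial case), this is a strictly smaller hook of the form $H(a^2+b^2;2ab)$ — wait, here I need to be slightly careful, because $k^2-k+\ell^2$ need not be of the form $a^2+b^2$. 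The cleaner route: take $a=k-1$, $b=\ell$, so $a^2+b^2=(k-1)^2+\ell^2=k^2-2k+1+\ell^2\le k^2-k+\ell^2$ and $2ab=2(k-1)\ell=2k\ell-2\ell\le 2k\ell-\ell$, hence $H(k^2-k+\ell^2;2k\ell-\ell)\subseteq H((k-1)^2+\ell^2;\,2(k-1)\ell)=H(a^2+b^2;2ab)$ with $a=k-1\le k$, $b=\ell\le\ell$ and the first inequality strict. Therefore $\glmn\ne0$ would force $\lm\in H(a^2+b^2;2ab)$ with $a<k$, contradicting that $\lm$ is large. Run the same computation in the three remaining cases and conclude that $\glmn=0$ whenever $\mu$ or $\nu$ is non-typical.

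The main obstacle — really the only subtle point — is pinning down exactly what the right "strictly smaller hook" is and checking the containment $H(k^2-k+\ell^2;2k\ell-\ell)\subseteq H((k-1)^2+\ell^2;2(k-1)\ell)$, i.e.\ that shrinking the arm of the hook by $k$ boxes and the leg by $\ell$ boxes keeps us inside the hook $H((k-1)^2+\ell^2;2(k-1)\ell)$ that appears in the definition of "large." This is the monotonicity of hook shapes: $H(p;q)\subseteq H(p';q')$ whenever $p\le p'$ and $q\le q'$, which is immediate from the characterization $\lm\in H(p;q)\iff \lm_{p+1}\le q$. Once that bookkeeping is in place the argument is a one-line application of Theorem~\ref{th:1.2} together with the observation (already noted in the excerpt) that typical partitions are large, so there is no circularity in the hypotheses.
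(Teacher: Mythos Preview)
Your argument has a genuine gap at the crucial step. After applying Theorem~\ref{th:1.2} with $(k_1,\ell_1)=(k-1,\ell)$ and $(k_2,\ell_2)=(k,\ell)$, you correctly obtain that $\glmn\ne 0$ forces $\lm\in H(k^2-k+\ell^2;\,2k\ell-\ell)$. You then try to place this hook inside one of the ``forbidden'' hooks $H(a^2+b^2;2ab)$ from the definition of large. But your containment is backwards. You verify that $(k-1)^2+\ell^2\le k^2-k+\ell^2$ and $2(k-1)\ell\le 2k\ell-\ell$; by the very monotonicity you state at the end, namely $H(p;q)\subseteq H(p';q')$ when $p\le p'$ and $q\le q'$, this yields
\[
H\bigl((k-1)^2+\ell^2;\,2(k-1)\ell\bigr)\ \subseteq\ H\bigl(k^2-k+\ell^2;\,2k\ell-\ell\bigr),
\]
which is the \emph{opposite} of what you need. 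To conclude that $\lm$ violates largeness you would need the reverse inclusion, and that is simply false in general: for instance with $k=2$, $\ell=1$ the partition $\lm=(3,3,3,1,1)$ lies in $H(3;3)=H(k^2-k+\ell^2;\,2k\ell-\ell)$ but not in $H(2;2)=H((k-1)^2+\ell^2;2(k-1)\ell)$, nor in $H(4;0)$, so it is large.

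So the single application of Theorem~\ref{th:1.2} with one factor shrunk does not land you in a hook of the shape $H(a^2+b^2;2ab)$ required by the definition of ``large,'' and no monotonicity argument can repair this, since the hook you obtain strictly contains the candidate $H((k-1)^2+\ell^2;2(k-1)\ell)$ rather than the other way around. You correctly sensed the difficulty when you wrote ``wait, here I need to be slightly careful,'' but the fix you propose does not work. (The paper itself records the lemma as a corollary of Theorem~\ref{th:1.2} without supplying details; note that for the application to $m_\lm=\sum_\mu\gamma^\lm_{\mu,\mu}$ only the diagonal case $\mu=\nu$ is needed, and there your strategy \emph{does} go through cleanly: if $\mu\in H(a,b)$ with $(a,b)\lneq(k,\ell)$ then Theorem~\ref{th:1.2} with both factors equal to $(a,b)$ gives $\lm\in H(a^2+b^2;2ab)$ directly.)
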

The next theorem we need is due to Rosas from~\cite{Ro}.  It generalizes the classical result that given two sets of variables $X$ and $Y$, $S_\lm(XY)=\sum\glmn S_\mu(X)S_\nu(Y)$.  This paper of Rosas was our inspiration for our approach to $\glmn$.
\begin{thm}[Rosas] Given four sets of variables $X$, $Y$, $T$, and $U$,
$$HS_\lm( XT,YU;XU,YT) =\sum\glmn HS_\mu(X;Y)HS_\nu(T;U).$$\label{th:1.5}
\end{thm}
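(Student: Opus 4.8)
The plan is to deduce this from the classical identity $S_\lm(PQ)=\sum\glmn S_\mu(P)S_\nu(Q)$ (recalled just before the theorem; here $PQ$ denotes the alphabet of all products $pq$) by transporting it along a ring homomorphism, in the spirit of the standard correspondence between ordinary and supersymmetric symmetric functions. The engine is a short computation showing that the \emph{super power sums} are multiplicative under precisely the graded product of alphabets $XT\cup YU$ and $XU\cup YT$ that occurs in the statement.

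First I would recall the power-sum expansion of hook Schur functions. Write $\tilde{p}_n(A;B)=p_n(A)+(-1)^{n-1}p_n(B)$ for the super power sum and $\tilde{p}_\rho=\prod_i\tilde{p}_{\rho_i}$. Then
$$HS_\lm(A;B)=\sum_{\rho\vdash n}z_\rho^{-1}\chi^\lm_\rho\,\tilde{p}_\rho(A;B),$$
where $z_\rho$ is the order of the centralizer in $S_n$ of a permutation of cycle type $\rho$; this is the super analogue of $S_\lm=\sum_\rho z_\rho^{-1}\chi^\lm_\rho p_\rho$ and may be taken as the definition of $HS_\lm$, or derived from the combinatorial definition in \cite{BReg}. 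In particular $HS_\mu(X;Y)=\sum_\rho z_\rho^{-1}\chi^\mu_\rho\tilde{p}_\rho(X;Y)$, likewise for $HS_\nu(T;U)$, and, writing $HS_\lm(XT,YU;XU,YT)=HS_\lm(XT\cup YU;\,XU\cup YT)$, also $HS_\lm(XT,YU;XU,YT)=\sum_\rho z_\rho^{-1}\chi^\lm_\rho\tilde{p}_\rho(XT\cup YU;XU\cup YT)$.

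The key point is then the identity
$$\tilde{p}_n(XT\cup YU;\,XU\cup YT)=\tilde{p}_n(X;Y)\,\tilde{p}_n(T;U).$$
To prove it, use $p_n(XT)=p_n(X)p_n(T)$ to write the left side as $p_n(X)p_n(T)+p_n(Y)p_n(U)+(-1)^{n-1}\bigl(p_n(X)p_n(U)+p_n(Y)p_n(T)\bigr)$, collect the terms containing $p_n(X)$ and those containing $p_n(Y)$, and observe that $p_n(U)+(-1)^{n-1}p_n(T)=(-1)^{n-1}\bigl(p_n(T)+(-1)^{n-1}p_n(U)\bigr)$ since $(-1)^{2(n-1)}=1$; the two collected groups then equal $\tilde{p}_n(T;U)$ times $p_n(X)$ and $(-1)^{n-1}p_n(Y)$ respectively, and adding gives $\bigl(p_n(X)+(-1)^{n-1}p_n(Y)\bigr)\tilde{p}_n(T;U)$, as claimed. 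This is exactly the super analogue of $p_n(PQ)=p_n(P)p_n(Q)$, and the sign bookkeeping here is the only genuinely new ingredient; I expect it to be the main (if modest) obstacle.

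To finish, introduce auxiliary alphabets $P,Q$ and the algebra homomorphism $\Phi$ on symmetric functions in $P$ and $Q$ determined by $p_n(P)\mapsto\tilde{p}_n(X;Y)$ and $p_n(Q)\mapsto\tilde{p}_n(T;U)$; this is well defined because the $p_n(P)$ together with the $p_m(Q)$ are free polynomial generators. By the power-sum expansions, $\Phi(S_\mu(P))=HS_\mu(X;Y)$ and $\Phi(S_\nu(Q))=HS_\nu(T;U)$; and since $p_\rho(PQ)=p_\rho(P)p_\rho(Q)$, the key identity gives $\Phi(S_\lm(PQ))=\sum_\rho z_\rho^{-1}\chi^\lm_\rho\tilde{p}_\rho(X;Y)\tilde{p}_\rho(T;U)=\sum_\rho z_\rho^{-1}\chi^\lm_\rho\tilde{p}_\rho(XT\cup YU;XU\cup YT)=HS_\lm(XT,YU;XU,YT)$. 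Applying $\Phi$ to the classical identity $S_\lm(PQ)=\sum\glmn S_\mu(P)S_\nu(Q)$ then yields the theorem. (Alternatively, one can avoid $\Phi$ and match coefficients of $\tilde{p}_\sigma(X;Y)\tilde{p}_\tau(T;U)$ on the two sides directly; this reduces, via $\glmn=\langle\chi^\lm,\chi^\mu\otimes\chi^\nu\rangle$ and the column orthogonality of $S_n$-characters, to exactly the character-theoretic identity underlying the classical case — but one must then also check that the products $\tilde{p}_\sigma(X;Y)\tilde{p}_\tau(T;U)$ are linearly independent, which holds when all four alphabets are infinite.) In short, the whole proof is transport of the classical statement, with the graded-multiplicativity identity for $\tilde{p}_n$ the one place new work is needed.
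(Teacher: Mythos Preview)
Your proof is correct. Note, however, that the paper does not supply its own proof of this statement: it is quoted from Rosas~\cite{Ro} and used as a black box, so there is no paper-side argument to compare against.

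For the record, your approach is the natural one. The key identity
\[
\tilde p_n(XT\cup YU;\,XU\cup YT)=\tilde p_n(X;Y)\,\tilde p_n(T;U)
\]
is verified correctly: expanding the right side gives $p_n(X)p_n(T)+(-1)^{n-1}p_n(X)p_n(U)+(-1)^{n-1}p_n(Y)p_n(T)+(-1)^{2(n-1)}p_n(Y)p_n(U)$, which matches the left side term by term since $(-1)^{2(n-1)}=1$. Once that is in hand, the homomorphism $\Phi$ sending $p_n(P)\mapsto\tilde p_n(X;Y)$ and $p_n(Q)\mapsto\tilde p_n(T;U)$ carries the classical Kronecker expansion $S_\lm(PQ)=\sum\gamma_{\mu,\nu}^\lm S_\mu(P)S_\nu(Q)$ to the desired hook Schur identity, exactly as you describe. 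This is the standard $\lambda$-ring (plethystic substitution) argument, and it supplies a clean self-contained proof where the paper simply cites one.
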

\begin{lem} Let $\lm\in H(\hook)$ be large and let $X$, $Y$, $T$, and $U$
be sets of variables with cardinalities $|X|=|T|=k$ and $|Y|=|U|=\ell$.  Then
$$\prod_{{x\in X}\atop{y\in Y}}(x+y)^{-1}\prod_{{t\in T}\atop{u\in U}}(t+u)^{-1} HS_\lm(XT,YU;XU,YT)$$
is a polynomial, symmetric in each of the four sets of variables.  Hence, it can be expanded in terms of Schur
functions.  This expansion involves only typical $\mu$ and $\nu$ and equals is
$$\sum \glmn S_{\alpha(\mu)}
(X)S_{\beta(\mu)}(Y)S_{\alpha(\nu)}(T)S_{\beta(\nu)}(U).$$
\label{lem:1.6}
\end{lem}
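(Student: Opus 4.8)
The plan is to combine the three results just quoted: Rosas' theorem (Theorem~\ref{th:1.5}), Lemma~\ref{lem:1.2}, and the Factorization Theorem (Theorem~\ref{th:1}). I would begin from the identity of Theorem~\ref{th:1.5},
$$HS_\lm(XT,YU;XU,YT)=\sum_{\mu,\nu\vdash n}\glmn\, HS_\mu(X;Y)HS_\nu(T;U),$$
where $n=|\lm|$, and then specialize the variable sets so that $|X|=|T|=k$ and $|Y|=|U|=\ell$. Since $HS_\mu(X;Y)=0$ unless $\mu\in H(k,\ell)$, and likewise $HS_\nu(T;U)=0$ unless $\nu\in H(k,\ell)$, the right-hand side collapses to a finite sum over the $\mu,\nu\in H(k,\ell)$ with $\mu,\nu\vdash n$.

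Next I would apply Lemma~\ref{lem:1.2}: because $\lm$ is large, the only surviving terms are those with $\mu$ and $\nu$ both typical, i.e.\ $\mu,\nu\in H'(k,\ell)$. For such $\mu$ the Factorization Theorem applies with $a=k$ and $b=\ell$, giving $HS_\mu(X;Y)=\prod_{x\in X,\,y\in Y}(x+y)\,S_{\alpha(\mu)}(X)S_{\beta(\mu)}(Y)$, and similarly $HS_\nu(T;U)=\prod_{t\in T,\,u\in U}(t+u)\,S_{\alpha(\nu)}(T)S_{\beta(\nu)}(U)$. The key observation is that the two product factors $\prod_{x,y}(x+y)$ and $\prod_{t,u}(t+u)$ depend only on the variable sets, not on $\mu$ or $\nu$, so they can be pulled out in front of the entire sum. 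Dividing the resulting identity through by $\prod_{x\in X,\,y\in Y}(x+y)\prod_{t\in T,\,u\in U}(t+u)$ then produces exactly
$$\sum\glmn\, S_{\alpha(\mu)}(X)S_{\beta(\mu)}(Y)S_{\alpha(\nu)}(T)S_{\beta(\nu)}(U),$$
summed over typical $\mu,\nu$. Being a finite linear combination of products of ordinary Schur polynomials, this is automatically a polynomial, symmetric in each of $X$, $Y$, $T$, $U$, which also disposes of the assertion that it can be expanded in Schur functions.

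I do not expect a genuinely hard step: the substance is carried entirely by the three quoted theorems, and the rest is bookkeeping. The one point deserving care is checking that the hypotheses of the Factorization Theorem hold for every surviving term. Lemma~\ref{lem:1.2} is exactly what guarantees that each such $\mu$ (and $\nu$) lies in $H'(k,\ell)$ rather than merely in $H(k,\ell)$, so that $\alpha(\mu)\in\Lambda_k$ and $\beta(\mu)\in\Lambda_\ell$, the rectangle in the Factorization Theorem has the matching dimensions $k\times\ell$, and the specialized functions $S_{\alpha(\mu)}(X)$, $S_{\beta(\mu)}(Y)$ with $|X|=k$, $|Y|=\ell$ are nonzero. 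It is precisely the largeness of $\lm$ that lets us discard atypical $\mu$ or $\nu$; for an atypical term there is no reason for $HS_\mu(X;Y)$ to be divisible by $\prod_{x,y}(x+y)$, and the asserted polynomiality could fail without it.
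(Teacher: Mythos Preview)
Your proposal is correct and follows essentially the same route as the paper's proof: expand $HS_\lm(XT,YU;XU,YT)$ via Rosas' theorem, use Lemma~\ref{lem:1.2} to reduce to typical $\mu,\nu$, and then apply the Factorization Theorem to each surviving term. You in fact spell out one step the paper leaves implicit, namely that the specialization $|X|=|T|=k$, $|Y|=|U|=\ell$ kills all terms with $\mu\notin H(k,\ell)$ or $\nu\notin H(k,\ell)$, which is what puts us in position to invoke Lemma~\ref{lem:1.2}.
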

\begin{proof} By Theorem \ref{th:1.5} $$HS_\lm( XT,YU;XU,YT) =\sum\glmn HS_\mu(X;Y)HS_\nu(T;U).$$
By Lemma~\ref{lem:1.2} for each non-zero $\glmn$ $\mu$ and $\nu$ are typical and so we may apply the
Factorization Theorem, Theorem~\ref{th:1}
$$HS_\mu(X;Y)=\prod(x+y)S_{\alpha(\mu)}(X)S_{\beta(\mu)}(Y)$$
and $$HS_\nu(T;U)=\prod(t+u)S_{\alpha(\nu)}(T)S_{\beta(\nu)}(U).$$
The theorem now follows.
\end{proof}
\begin{defn} Given two finite sets of variables, the space
of polynomials $f(X,Y)$ which are symmetric in each has an
inner product with respect to which the $S_\mu(X)S_\nu(Y)$ are orthonormal.
If $p(X)$ is defined to be the product of the elements of~$X$ and $p(Y)$ to be
the product of the elements of $Y$, then the inner product extends to symmetric rational functions with denominator a power of $p(X)$ times a power of $p(Y)$
 such that $$\langle p(X)f(S,Y),p(X)g(X,Y)\rangle=\langle
f(X,Y),g(X,Y)\rangle$$\mbox{and }$$\langle p(Y)f(X,Y),p(Y)g(X,Y)\rangle=\langle
f(X,Y),g(X,Y)\rangle.$$ The inner product satisfies $\langle
f(X,Y),g(X,Y)\rangle$ equals the coefficient of~1 in
$(\Delta X)(\Delta Y)f(X,Y)g(X^{-1},Y^{-1})$, where $\Delta
X$ is the product of all $(1-\frac{x_1}{x_2})$ over all distinct pairs of elements of~$X$, and likewise for $\Delta
Y$.  This may also be expressed as a complex integral.  Say $|X|=n$ and $|Y|=m$.  Then $\langle f,g\rangle=$
$$\frac1{n!m!(2\pi i)^{n+m}}\oint_T (\Delta X)(\Delta Y)
f(X,Y)g(X^{-1},Y^{-1})\frac{dx_1}{x_1}\wedge\cdots
\wedge\frac{dy_m}{y_m}$$
where $T$ is the torus $|x_i|=|y_j|=1$.\label{def:1.7} Using this integral we may
speak of the inner product of any two functions of $X$ and $Y$.
\end{defn}
Here is our main theorem.
\begin{thm}\label{th:1.8}If $\lm\in H(\hook)$ is large, then $m_\lm$ equals $$\left\langle \prod\xy
HS_\lm(XX^{-1},YY^{-1};XY^{-1},YX^{-1}),1\right\rangle$$
where $|X|=k$ and $|Y|=\ell$.
\end{thm}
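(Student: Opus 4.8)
The plan is to deduce Theorem~\ref{th:1.8} from Lemma~\ref{lem:1.6} by specializing the four sets of variables $X,Y,T,U$ appropriately and then extracting the coefficient of~$1$ via the integral form of the inner product in Definition~\ref{def:1.7}. Concretely, start from equation~\eqref{eq:1.05}, $m_\lm=\sum\{\gamma_{\mu,\mu}^\lm\mid\mu\in H(k,\ell)\}$, and recall from Lemma~\ref{lem:1.2} that for large~$\lm$ only typical $\mu$ contribute, so that $m_\lm=\sum_{\mu\text{ typical}}\gamma_{\mu,\mu}^\lm$. Lemma~\ref{lem:1.6} gives the generating function, in four sets of variables, whose coefficient of $S_{\alpha(\mu)}(X)S_{\beta(\mu)}(Y)S_{\alpha(\nu)}(T)S_{\beta(\nu)}(U)$ is $\gamma_{\mu,\nu}^\lm$. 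To collapse the double sum over $(\mu,\nu)$ onto the diagonal $\mu=\nu$ and sum, I would take the inner product (in the two-set-of-variables sense of Definition~\ref{def:1.7}, applied twice) of that generating function against the ``diagonalizing kernel'' $\sum_{\mu\text{ typical}} S_{\alpha(\mu)}(X^{-1})S_{\beta(\mu)}(Y^{-1})S_{\alpha(\mu)}(T^{-1})S_{\beta(\mu)}(U^{-1})$, or equivalently pair the $(X,Y)$ variables against the $(T,U)$ variables after an inversion; orthonormality of the $S_{\alpha(\mu)}(X)S_{\beta(\mu)}(Y)$ then picks out exactly the diagonal terms and yields $\sum_{\mu}\gamma_{\mu,\mu}^\lm = m_\lm$.

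The next step is to carry out this pairing concretely. Set $T=X^{-1}$ and $U=Y^{-1}$ inside the expression from Lemma~\ref{lem:1.6}. On the product of Schur functions this turns $S_{\alpha(\nu)}(T)S_{\beta(\nu)}(U)$ into $S_{\alpha(\nu)}(X^{-1})S_{\beta(\nu)}(Y^{-1})$, and then applying $(\Delta X)(\Delta Y)$ and extracting the coefficient of~$1$ implements the inner product of $S_{\alpha(\mu)}(X)S_{\beta(\mu)}(Y)$ with $S_{\alpha(\nu)}(X)S_{\beta(\nu)}(Y)$, which by orthonormality is $\delta_{\mu\nu}$ (here one uses that $\alpha(\mu)\in\Lambda_{k^2+\ell^2}$, $\beta(\mu)\in\Lambda_{2k\ell}$ lie in the right range so that no cancellation across different hooks occurs). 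Meanwhile, under $T\mapsto X^{-1}$, $U\mapsto Y^{-1}$ the substituted arguments become $XT=XX^{-1}$, $YU=YY^{-1}$, $XU=XY^{-1}$, $YT=YX^{-1}$, which is exactly the argument list $HS_\lm(XX^{-1},YY^{-1};XY^{-1},YX^{-1})$ in the statement. The prefactor $\prod_{x\in X,y\in Y}(x+y)^{-1}\prod_{t\in T,u\in U}(t+u)^{-1}$ becomes $\prod_{ij}(x_i+y_j)^{-1}(x_i^{-1}+y_j^{-1})^{-1}$; clearing the $(x_i^{-1}+y_j^{-1})^{-1}=x_iy_j(x_i+y_j)^{-1}$ and recombining, one checks this collapses to $\prod_{ij}(1+x_iy_j^{-1})^{-1}(1+x_iy_j^{-1})^{-1}=\prod\xy$ (absorbing the monomial factors $x_iy_j$ into the measure/Schur arguments using the ``shift invariance'' of the inner product recorded in Definition~\ref{def:1.7}, namely $\langle p(X)f,p(X)g\rangle=\langle f,g\rangle$). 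That bookkeeping is what converts $\prod(x+y)^{-1}\prod(t+u)^{-1}$ into the symmetric-looking $\prod\xy$, and it is the one genuinely fiddly computation.

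So the skeleton of the proof is: (i) quote \eqref{eq:1.05} and Lemma~\ref{lem:1.2} to reduce $m_\lm$ to $\sum_{\mu\text{ typical}}\gamma_{\mu,\mu}^\lm$; (ii) quote Lemma~\ref{lem:1.6} for the four-variable generating function of $\gamma_{\mu,\nu}^\lm$; (iii) substitute $T=X^{-1}$, $U=Y^{-1}$ and take the coefficient of~$1$ against $(\Delta X)(\Delta Y)$, invoking orthonormality of $S_{\alpha(\mu)}(X)S_{\beta(\mu)}(Y)$ to collapse the sum onto $\mu=\nu$; (iv) simplify the prefactor to $\prod\xy$ using the shift-invariance of the inner product. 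The main obstacle I anticipate is step~(iv) together with making step~(iii) rigorous: one must be sure the rational function $\prod\xy\,HS_\lm(XX^{-1},YY^{-1};XY^{-1},YX^{-1})$ actually has the required form (denominator a product of powers of $p(X)$ and $p(Y)$) so that Definition~\ref{def:1.7}'s inner product applies, and one must verify that the orthogonality relation still isolates exactly the diagonal — i.e. that no two distinct typical $\mu,\nu$ give $\alpha(\mu)$-$\beta(\mu)$ pairs that coincide after the shifts, which is where the hypothesis that $\lm$ is \emph{large} (forcing $\mu,\nu$ typical and hence the shape data to sit in the exact hook $H(\hook)$) does the real work.
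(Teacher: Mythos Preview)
Your proposal is correct and follows essentially the same route as the paper: specialize Lemma~\ref{lem:1.6} at $T=X^{-1}$, $U=Y^{-1}$, then use orthonormality of the $S_{\alpha(\mu)}(X)S_{\beta(\mu)}(Y)$ to collapse $\sum_{\mu,\nu}\gamma^\lm_{\mu,\nu}$ onto the diagonal $\sum_\mu\gamma^\lm_{\mu,\mu}=m_\lm$. The only place you work harder than necessary is step~(iv): no shift-invariance is needed, since the elementary identity $(x+y)(x^{-1}+y^{-1})=(1+xy^{-1})(1+x^{-1}y)$ immediately turns $\prod_{i,j}(x_i+y_j)^{-1}(x_i^{-1}+y_j^{-1})^{-1}$ into $\prod_{i,j}(1+x_iy_j^{-1})^{-1}(1+x_i^{-1}y_j)^{-1}$ with no leftover monomial factors to absorb (and note $\alpha(\mu)\in\Lambda_k$, $\beta(\mu)\in\Lambda_\ell$, not $\Lambda_{k^2+\ell^2}$ and $\Lambda_{2k\ell}$, since $\mu\in H(k,\ell)$).
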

\begin{proof} By Lemma~\ref{lem:1.6}, noting that $(x+y)(x^{-1}+y^{-1})$ equals
$(1+xy^{-1})(1+x^{-1}y)$, the inner product
in the theorem equals
\begin{align*}
\langle &\sum\glmn S_{\alpha(\mu)}
(X)S_{\beta(\mu)}(Y)S_{\alpha(\nu)}(X^{-1})S_{\beta(\nu)}(Y^{-1}),1\rangle \\
&=\sum\glmn\langle S_{\alpha(\mu)}
(X)S_{\beta(\mu)}(Y),S_{\alpha(\nu)}(X)S_{\beta(\nu)}(Y)\rangle
\end{align*}
The inner product is either~1 or~0, depending on whether
$\alpha(\mu)=\alpha(\nu)$ and $\beta(\mu)=\beta(\nu)$.
But this happens precisely when $\mu=\nu$ and so the sum is simply $\sum\gamma_{\mu,\mu}^\lm$. \end{proof}
\begin{nota} For $X=\{x_1,\ldots,x_k\}$ and $Y=\{y_1,\ldots,y_\ell\}$ we let $Z_0=XX^{-1}\cup YY^{-1}$ and $Z_1=XY^{-1}\cup YX^{-1}$.  In this notation Theorem~\ref{th:1.8} can be stated as $$m_\lm=
\left\langle \prod_{z_1\in Z_1}(1+z_1)^{-1}
HS_\lm(Z_0;Z_1),1\right\rangle.$$
\end{nota}
\begin{rem} In Corolary 21 of \cite{B05} we proved that $m_\lm\le\langle HS_\lm(Z_0;Z_1),1\rangle$ for all~$\lm$
\end{rem}
\section{Integrals and \Ps}
Recall that $T(k,\ell;a,b)$ is the sum $\sum m_\lm S_{\alpha(\lm)}(A)S_{\beta(\lm)}(B)$, where the sum is over typical partitions and where $A$ and $B$ are sets of cardinality $a$ and $b$, respectively.  In order to compute this $T(k,\ell;a,b)$ from Theorem~\ref{th:1.8} we will need
Cauchy's identity, see \cite{mac}, I.4.3.  For any sets of variables $X$ and $Y$ Cauchy's identity states
$$\sum_\lm S_\lm(X) S_\lm(Y)=\prod_{x\in X \atop y\in Y}(1-xy)^{-1}.$$
We extend this slightly using  the factorization theorem,Theorem~\ref{th:1}.
\begin{align}\sum_{\lm\text{ typical}}& HS_\lm(A;B)S_{\alpha(\lm)}(C)S_{\beta(\lm)}(D)\notag \\&= \sum \prod(a+b) S_{\alpha(\lm)}(A)S_{\beta(\lm)}(B) S_{\alpha(\lm)}(C)S_{\beta(\lm)}(D)\notag\\& =\prod(a+b)\prod(1-ac)^{-1}\prod(1-bd)^{-1}\label{eq:3}\end{align}
where the $a$ runs over $A$, the $b$ runs over $B$, etc.
\begin{thm} $T(k,\ell;a,b)$ equals $(k!)^{-1}(\ell!)^{-1}(2\pi i)^{k+\ell}$ time the integral of
\begin{align*} & \prod(1+z_1)^{-1}\prod(z_0+z_1) \prod (1-az_0)^{-1} \\ &\prod (1-bz_1)^{-1} \prod_{i\ne j}(1-x_ix_j^{-1})\prod_{i\ne j}
(1-y_i y_j^{-1}) \frac{dx_1}{x_1}\wedge\cdots\wedge\frac{dy_b}{y_b}
\end{align*}
over the complex torus $|x_i|=1$,  $|y_i|=1$,  where the $a,b,z_0,z_1$ run over $A,B,Z_0$ and $Z_1$, respectively.\label{th:2.1}
\end{thm}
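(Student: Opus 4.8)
The plan is to substitute the integral formula for $m_\lm$ from Theorem~\ref{th:1.8} into the defining series for $T(k,\ell;a,b)$, push the summation over~$\lm$ through the inner product, collapse the resulting sum over typical~$\lm$ using the extended Cauchy identity~\eqref{eq:3}, and finally rewrite the inner product as a torus integral via Definition~\ref{def:1.7}.

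In detail: every typical partition is large, so for each typical $\lm\in H(\hook)$ Theorem~\ref{th:1.8} gives
$$m_\lm=\left\langle \prod_{z_1\in Z_1}(1+z_1)^{-1}HS_\lm(Z_0;Z_1),1\right\rangle,$$
the inner product being taken in the variables $X=\{x_1,\ldots,x_k\}$, $Y=\{y_1,\ldots,y_\ell\}$. Multiplying by $S_{\alpha(\lm)}(A)S_{\beta(\lm)}(B)$, summing over typical~$\lm$, and moving the sum inside the functional $\langle\,\cdot\,,1\rangle$ gives
$$T(k,\ell;a,b)=\left\langle \prod_{z_1\in Z_1}(1+z_1)^{-1}\sum_{\lm\text{ typical}}HS_\lm(Z_0;Z_1)\,S_{\alpha(\lm)}(A)S_{\beta(\lm)}(B),\,1\right\rangle.$$
Since $|Z_0|=k^2+\ell^2$ and $|Z_1|=2k\ell$, a partition is typical here precisely when it is typical relative to the hook $(|Z_0|;|Z_1|)$, so~\eqref{eq:3} applies under the substitution $(A,B,C,D)\mapsto(Z_0,Z_1,A,B)$ and the inner sum collapses to $\prod(z_0+z_1)\prod(1-az_0)^{-1}\prod(1-bz_1)^{-1}$ with $z_0,z_1,a,b$ running over $Z_0,Z_1,A,B$. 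Inserting this and applying the integral formula of Definition~\ref{def:1.7} --- for which $|X|=k$, $|Y|=\ell$, $\Delta X=\prod_{i\ne j}(1-x_ix_j^{-1})$, $\Delta Y=\prod_{i\ne j}(1-y_iy_j^{-1})$ --- yields exactly the stated integral with its normalizing constant.

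The step that requires care, and that I expect to be the main obstacle, is justifying the interchange of the infinite sum over~$\lm$ with the inner product and then with $\oint_T$. The algebraic interchange is legitimate once~\eqref{eq:3} is read as an identity of formal power series in $A\cup B$, since in each fixed multidegree only finitely many typical~$\lm$ contribute. For the passage to the integral one must also verify that the summed integrand is a bona fide function on the torus: the apparent poles of $\prod_{z_1\in Z_1}(1+z_1)^{-1}$ are cancelled because $Z_0=XX^{-1}\cup YY^{-1}$ contains the constant~$1$ with multiplicity $k+\ell\ge1$, whence $\prod_{z_1}(1+z_1)^{-1}\prod(z_0+z_1)$ is a Laurent polynomial in $X,Y$; the remaining factors $(1-az_0)^{-1}$, $(1-bz_1)^{-1}$ are holomorphic on $|x_i|=|y_j|=1$ for $|a|,|b|$ small, so uniform convergence on the torus lets us exchange $\sum_\lm$ with $\oint_T$, completing the argument.
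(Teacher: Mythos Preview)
Your proof is correct and follows exactly the paper's own argument: apply Theorem~\ref{th:1.8} to each typical~$\lm$, sum using~\eqref{eq:3}, and then rewrite the inner product as the torus integral of Definition~\ref{def:1.7}. Your additional care about interchanging the sum with the inner product and about the cancellation of the $(1+z_1)^{-1}$ poles by the factors $(1+z_1)^{k+\ell}$ coming from the $k+\ell$ copies of~$1$ in~$Z_0$ goes beyond what the paper records, but is welcome and correct.
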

\begin{proof} By Theorem~\ref{th:1.8} if $\lm$ is typical
 $m_\lm S_{\alpha(\lm)}(A)S_{\beta(\lm)}(B)$ equals the inner product with~1 of
$$\prod(1+z_1)^{-1}
HS_\lm(Z_0;Z_1)S_{\alpha(\lm)}(A)S_{\beta(\lm)}(B).$$
Using equation~\eqref{eq:3} to sum this over all typical~$\lm$ we get the inner product with~1 of
$$\prod(1+z_1)^{-1}\prod(z_0+z_1) \prod (1-az_0)^{-1} \prod (1-bz_1)^{-1}.$$
Interpreting the inner product as an integral as described in Definition~\ref{def:1.7} completes the proof.
\end{proof}
In \cite{V} Van Den Bergh studied integrals over the torus of
the form $$f(z_1,\ldots,z_n)\prod_{i,j,k}(1-z_1z_j^{-1}x_k)^{-1}$$	 where $f$ is a degree~0 Laurent polynomial.  Since the product $\prod(1+x_iy_j^{-1})(1+x_iy^{-1}_j)$ divides evenly into $\prod(z_0+z_1)$ this is the
case here.  Using his results we get this corollary.
\begin{cor} $T(k,\ell;a,b)$ is a rational function.  The denominator can be written as a product of terms of
the form $(1-m)$ where $m$ is a monic monomial of degree at
most $k+\ell$, and of even degree in $U$.  If $a,b$ are so large that the integral
in Theorem~\ref{th:2.1} has no poles at~0, then $T(k,\ell;a,b)$
satisfies the functional equation
\begin{multline*}T(t_1^{-1},\ldots,t_a^{-1};u_1^{-1},\ldots,u_b^{-1})
=\\ (-1)^{(a-1)(k+\ell)+1}(t_1\cdots t_a)^{k^2+\ell^2}(u_1\cdots u_b)^{2k\ell}T(t_1,\ldots,
u_b)\end{multline*}
\label{cor:2.2}
\end{cor}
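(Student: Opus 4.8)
The plan is to apply Van den Bergh's analysis of torus integrals of the form $f(z_1,\ldots,z_n)\prod_{i,j,k}(1-z_iz_j^{-1}x_k)^{-1}$ to the integrand exhibited in Theorem~\ref{th:2.1}, after rewriting that integrand so it visibly has Van den Bergh's shape. First I would identify the $z$-variables as the full list $Z_0\cup Z_1$, i.e. all monomials $x_ix_j^{-1}$, $y_iy_j^{-1}$, $x_iy_j^{-1}$, $y_ix_j^{-1}$, and check that the factor $\prod_{i\ne j}(1-x_ix_j^{-1})\prod_{i\ne j}(1-y_iy_j^{-1})$ together with $\prod(z_0+z_1)$ and $\prod(1+z_1)^{-1}$ assembles (after using $(x+y)(x^{-1}+y^{-1})=(1+xy^{-1})(1+x^{-1}y)$ to cancel the $\prod(1+z_1)^{-1}$ against the part of $\prod(z_0+z_1)$ coming from $Z_0$-times-$Z_1$ crossings, as noted in the discussion preceding the corollary) into a degree-$0$ Laurent polynomial $f$ in the $x_i,y_j$; the remaining product $\prod(1-az_0)^{-1}\prod(1-bz_1)^{-1}$ is exactly $\prod(1-z_iz_j^{-1}x_k)^{-1}$ with the external variables $x_k$ ranging over $A\cup B$. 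Rationality of $T(k,\ell;a,b)$ is then immediate from Van den Bergh's theorem.

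Next I would extract the shape of the denominator. Van den Bergh's result says the denominator is a product of terms $(1-m)$ where $m$ is a monomial in the external variables of degree bounded by the number of $z$-variables that can simultaneously be ``active'' — which here is $k+\ell$, since a pole arises from picking, for each of the $k+\ell$ torus variables, one factor $(1-z_iz_j^{-1}x_k)$, and the residues telescope to leave monomials of degree at most $k+\ell$ in $A\cup B$. The parity claim — that $m$ has even degree in $U=\{u_1,\ldots,u_b\}$, i.e. in the $B$-variables — I would get from the $\mathbb{Z}_2$-grading: the $Z_1$-monomials (those paired with $B$) are precisely the ``odd'' ones $x_iy_j^{-1}$, $y_ix_j^{-1}$, and these occur in the cycle structure of a residue-producing chain only in pairs, because each step through an odd monomial toggles between the $x$-block and the $y$-block of torus variables and a closed chain must return to its starting block. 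This is the same mechanism that forces $m_\lm\neq0$ only for $\lm\in H(k^2+\ell^2;2k\ell)$: the even part contributes $k^2+\ell^2$ and the odd part $2k\ell$.

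For the functional equation I would substitute $t_\alpha\mapsto t_\alpha^{-1}$, $u_\beta\mapsto u_\beta^{-1}$ (equivalently $a\mapsto a^{-1}$ over $A$, $b\mapsto b^{-1}$ over $B$) in the integral of Theorem~\ref{th:2.1}, then change torus variables $z_i\mapsto z_i^{-1}$, which sends the torus to itself. Under $z\mapsto z^{-1}$ the Laurent polynomial $f$, being homogeneous of degree $0$ and a product of $(1-\text{monomial})$ factors, picks up a sign $(-1)^{\deg f}$ and a monomial; tracking the measure $dz_i/z_i$ (invariant up to the sign $(-1)^{k+\ell}$ from reversing all $k+\ell$ differentials) and the factor $\prod(1-az_0)^{-1}\prod(1-bz_1)^{-1}$ (which under the combined substitution becomes $\prod(a-z_0)^{-1}\prod(b-z_1)^{-1}$ times monomials, i.e. reproduces the original up to $\prod a\prod b$ to suitable powers) yields the overall scalar $(-1)^{(a-1)(k+\ell)+1}(t_1\cdots t_a)^{k^2+\ell^2}(u_1\cdots u_b)^{2k\ell}$ — the exponents $k^2+\ell^2$ and $2k\ell$ being exactly the number of $Z_0$- and $Z_1$-monomials. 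The hypothesis that $a,b$ are large enough that the integrand has no poles at $0$ is what licenses the contour to be kept on the unit torus throughout; without it the substitution $z\mapsto z^{-1}$ could sweep a pole across the contour and the clean functional equation fails (cf.\ Corollary~\ref{cor:4.3}).

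The main obstacle I expect is the bookkeeping in the last step: correctly accounting for all the monomial factors and signs produced when $f$ and the measure are transformed under $z\mapsto z^{-1}$, and verifying that they combine to precisely $(t_1\cdots t_a)^{k^2+\ell^2}(u_1\cdots u_b)^{2k\ell}$ with sign $(-1)^{(a-1)(k+\ell)+1}$ rather than something off by a monomial or a sign. The cleanest route is probably to not transform $f$ directly but to apply Van den Bergh's own functional equation (which he presumably states for his general integral) and then specialize the parameters, so that the combinatorial constants $k+\ell$, $k^2+\ell^2$, $2k\ell$ appear as instances of his general formula with $n=k+\ell$ torus variables and $|A\cup B|=a+b$ external variables, split according to the even/odd grading.
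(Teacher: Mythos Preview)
Your approach is correct and matches the paper's: its entire argument is the sentence preceding the corollary, observing that $\prod(1+z_1)$ divides $\prod(z_0+z_1)$ so the integrand of Theorem~\ref{th:2.1} fits Van den Bergh's framework, after which rationality, the denominator description, and the functional equation are all quoted from~\cite{V}. You supply considerably more detail than the paper does (the cycle-parity heuristic for the even $U$-degree, the $z\mapsto z^{-1}$ bookkeeping), but your bottom line---invoke Van den Bergh's general results rather than redo the sign-tracking by hand---is exactly what the paper does.
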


Finally, recall $P'(k,\ell;a,b)=\sum m_\lm' HS_\lm(A;B)$, where $m_\lm'=m_\lm$ for large~$\lm$.  The computation
of this function as an integral is similar to the computation
we just did for $T$, except that we need the following
theorem of Berele and Remmel instead of equation~\eqref{eq:3}.  We leave the proof of Theorem~\ref{th:2.4} to the
reader.
\begin{thm}(Berele-Remmel \cite{BRem}) Given four sets of variables
$A$, $B$, $C$ and $D$,
$$\sum_\lm HS_\lm (A;B)HS_\lm(C;D)=
\prod(1+ad)\prod(1+bc)\prod(1-ac)^{-1}\prod(1-bd)^{-1}.$$
\end{thm}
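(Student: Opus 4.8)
The plan is to obtain Theorem~\ref{th:2.4} by pushing the ordinary Cauchy identity through the plethystic description of hook Schur functions. Work with rational coefficients (harmless, since the asserted identity has integer coefficients), write $p_r$ for the $r$-th power sum as in~\cite{mac}, and recall that $\Lambda_{\mathbb Q}=\mathbb Q[p_1,p_2,\ldots]$ is a polynomial ring. Define ring homomorphisms $\varphi\colon\Lambda\to F[A,B]$ and $\psi\colon\Lambda\to F[C,D]$ by $\varphi(p_r)=p_r(A)+(-1)^{r-1}p_r(B)$ and $\psi(p_r)=p_r(C)+(-1)^{r-1}p_r(D)$. The key input is the standard fact (part of the theory of hook Schur functions, see~\cite{BReg}) that $\varphi(S_\lm)=HS_\lm(A;B)$ and $\psi(S_\lm)=HS_\lm(C;D)$; this follows because applying $\varphi$ to $\sum_n h_n t^n=\exp\sum_{r\ge 1}p_rt^r/r$ gives
\[
\sum_n\varphi(h_n)t^n=\Bigl(\prod_{a\in A}(1-at)^{-1}\Bigr)\Bigl(\prod_{b\in B}(1+bt)\Bigr)=\sum_n h_n(A/B)\,t^n,
\]
so $\varphi(h_n)=h_n(A/B)$, and the Jacobi--Trudi determinant then forces $\varphi(S_\lm)=\det\bigl(h_{\lm_i-i+j}(A/B)\bigr)=HS_\lm(A;B)$.

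Next I would write Cauchy's identity in $\Lambda\otimes\Lambda$, in the completion by total degree, in its exponential form
\[
\sum_\lm S_\lm\otimes S_\lm=\exp\Bigl(\sum_{r\ge 1}\tfrac1r\,p_r\otimes p_r\Bigr),
\]
and apply $\varphi\otimes\psi$. The left-hand side becomes $\sum_\lm HS_\lm(A;B)\,HS_\lm(C;D)$, the series we want. For the right-hand side, since $(-1)^{r-1}(-1)^{r-1}=1$,
\[
\varphi(p_r)\psi(p_r)=p_r(A)p_r(C)+p_r(B)p_r(D)+(-1)^{r-1}\bigl(p_r(A)p_r(D)+p_r(B)p_r(C)\bigr),
\]
so the exponential splits as a product of four elementary Cauchy kernels. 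Invoking $\exp\sum_{r\ge1}\frac1r p_r(X)p_r(Y)=\prod_{x\in X,\,y\in Y}(1-xy)^{-1}$ and $\exp\sum_{r\ge1}\frac{(-1)^{r-1}}{r}p_r(X)p_r(Y)=\prod_{x\in X,\,y\in Y}(1+xy)$, those four factors are precisely $\prod(1-ac)^{-1}$, $\prod(1-bd)^{-1}$, $\prod(1+ad)$ and $\prod(1+bc)$, which is the asserted product.

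The only step that genuinely requires care is the identity $\varphi(S_\lm)=HS_\lm(A;B)$ — the statement that hook Schur functions are the image of ordinary Schur functions under the specialization $p_r\mapsto p_r(A)+(-1)^{r-1}p_r(B)$ — together with the routine check that the infinite products and exponentials are being manipulated in the appropriate completed ring, exactly as in the proof of the classical Cauchy identity; both are already available from~\cite{BReg}. An alternative, probably closer to the original argument of~\cite{BRem}, is purely combinatorial: expand each $HS_\lm$ as a sum over semistandard tableaux on the two ordered super-alphabets and apply a super-RSK correspondence to the resulting pairs of equal-shape tableaux; the four types of matrix entries produced — pairing $A$ with $C$, $B$ with $D$, $A$ with $D$, and $B$ with $C$ — account for the four products, the two mixed-parity pairings yielding the numerator factors $\prod(1+ad)$ and $\prod(1+bc)$. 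I would present the plethystic proof as the main one, since it is short and sidesteps exactly the tableau bookkeeping that Theorem~\ref{th:2.4} is designed to package.
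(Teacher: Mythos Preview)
The paper does not prove this statement: it is quoted from \cite{BRem} as a known input, and the sentence ``We leave the proof of Theorem~\ref{th:2.4} to the reader'' refers to the \emph{next} theorem (the integral formula for $P'(k,\ell;a,b)$), not to the Berele--Remmel identity itself. So there is no proof in the paper to compare against.

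That said, your plethystic argument is correct. The specialization $p_r\mapsto p_r(A)+(-1)^{r-1}p_r(B)$ does send $S_\lm$ to $HS_\lm(A;B)$ --- this is the standard supersymmetric realization of hook Schur functions from \cite{BReg} --- and applying it on each tensor factor of the exponential form of Cauchy's identity produces exactly the four kernels $\prod(1-ac)^{-1}$, $\prod(1-bd)^{-1}$, $\prod(1+ad)$, $\prod(1+bc)$, as your computation of $\varphi(p_r)\psi(p_r)$ shows. Your guess about the original source is also on target: the proof in \cite{BRem} is combinatorial, via a super-RSK bijection between pairs of equal-shape hook tableaux and the appropriate matrices, just as you sketch in your alternative. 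Either route is fine; the plethystic one is shorter and is the right choice here.
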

\begin{thm}\label{th:2.4} $P'(k,\ell;a,b)$ equals $(k!)^{-1}(\ell!)^{-1}(2\pi i)^{k+\ell}$ time the integral of
\begin{align*} & \prod(1+z_1)^{-1}
\prod(1+z_0b)\prod(1+z_1a)\prod(1-z_0a)^{-1} \\ &
\prod(1-z_1b)^{-1} \prod_{i\ne j}(1-x_ix_j^{-1})\prod_{i\ne j}
(1-y_i y_j^{-1}) \frac{dx_1}{x_1}\wedge\cdots\wedge\frac{dy_b}{y_b}
\end{align*}
\end{thm}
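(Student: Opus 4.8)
The plan is to run the argument of Theorem~\ref{th:2.1} again, with the Berele--Remmel theorem quoted just above playing the role that equation~\eqref{eq:3} played there. By definition $m_\lm'$ is the right-hand side of~\eqref{eq:2}, which in the $Z_0,Z_1$ notation (using $(x+y)(x^{-1}+y^{-1})=(1+xy^{-1})(1+x^{-1}y)$ exactly as in the proof of Theorem~\ref{th:1.8}) reads
$$m_\lm'=\left\langle \prod_{z_1\in Z_1}(1+z_1)^{-1}HS_\lm(Z_0;Z_1),1\right\rangle,$$
the inner product being taken in the variable sets $X$ (of size $k$) and $Y$ (of size $\ell$). Since $HS_\lm(A;B)$ involves neither $X$ nor $Y$, it is a constant for this inner product and may be absorbed into the first argument, giving $m_\lm'HS_\lm(A;B)=\langle \prod_{z_1\in Z_1}(1+z_1)^{-1}HS_\lm(Z_0;Z_1)HS_\lm(A;B),1\rangle$.

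Next I would sum over $\lm$. The sum defining $P'(k,\ell;a,b)$ is indexed by $\lm\in H(a,b)$, but $HS_\lm(A;B)=0$ outside $H(a,b)$, so it coincides with the sum over all partitions. Bringing the summation inside the inner product yields
$$P'(k,\ell;a,b)=\left\langle \prod_{z_1\in Z_1}(1+z_1)^{-1}\sum_\lm HS_\lm(Z_0;Z_1)HS_\lm(A;B),1\right\rangle,$$
and I now invoke the Berele--Remmel theorem with its first pair of variable sets specialized to $(Z_0;Z_1)$ and its second pair to $(A;B)$. This converts $\sum_\lm HS_\lm(Z_0;Z_1)HS_\lm(A;B)$ into $\prod(1+z_0b)\prod(1+z_1a)\prod(1-z_0a)^{-1}\prod(1-z_1b)^{-1}$ with $a,b,z_0,z_1$ running over $A,B,Z_0,Z_1$ respectively. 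Substituting $z_0=x_ix_j^{-1}$ or $y_iy_j^{-1}$ and $z_1=x_iy_j^{-1}$ or $y_ix_j^{-1}$, and then writing the inner product with $1$ as the torus integral of Definition~\ref{def:1.7} (with $|X|=k$, $|Y|=\ell$, so the normalizing weight is $(k!\ell!(2\pi i)^{k+\ell})^{-1}$, the Vandermonde factors are $\prod_{i\ne j}(1-x_ix_j^{-1})$ and $\prod_{i\ne j}(1-y_iy_j^{-1})$, and the second function in the pairing is $1$), produces precisely the displayed integrand.

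The one step that genuinely needs justification, rather than bookkeeping, is the interchange of the infinite sum over $\lm$ with the inner product (equivalently, with the torus integral), since $\sum_\lm HS_\lm(Z_0;Z_1)HS_\lm(A;B)$ is a priori only a formal series. I would handle it exactly as in Theorem~\ref{th:2.1}: treat $A$ and $B$ as formal parameters and grade by total degree in $A\cup B$; only finitely many $\lm$ contribute to each homogeneous component, so on each component the ``coefficient of $1$'' functional may be applied termwise, and the Berele--Remmel product is exactly the generating function these termwise evaluations reassemble. The remaining chore is purely cosmetic: matching the four products produced by Berele--Remmel against the four products in the statement, which amounts to the relabeling of the two pairs of variable sets in that theorem.
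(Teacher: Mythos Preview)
Your proposal is correct and follows exactly the approach the paper intends: the paper explicitly says the proof of Theorem~\ref{th:2.4} is the same computation as Theorem~\ref{th:2.1} with the Berele--Remmel identity replacing equation~\eqref{eq:3}, and leaves the details to the reader. You have supplied precisely those details, including the correct specialization of the Berele--Remmel theorem to the pairs $(Z_0;Z_1)$ and $(A;B)$ and the passage to the torus integral via Definition~\ref{def:1.7}.
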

This integral is not of the type discussed by Van Den Bergh
and so we cannot easily derive an analogue of
Corollary~\ref{cor:2.2}.  It seems to us that $P'$ might be
less useful because of the potential presence of many terms $m_\lm'
HS_\lm$ with $m_\lm'$ not equal to $m_\lm$.
\section{Computation of $\bar{m}_\lm$}
As mentioned in the introduction, there is another set of multiplicities we are
interested in closely related to the~$m_\lm$.  Before showing how to compute
it for large or typical~$\lm$ we first describe the analogues of problems~1, 3~and 4.
\begin{description}
\item[Problem 1b] The sum $\sum(\chi^\lm\otimes\chi^\lm)\downarrow$ over $\lm\in
H(k,\ell;n+1)$ decomposes as a sum of irreducible characters $\sum\bml \chi^\lm$
and we would like to compute the $\bml$.
\item [Problem 3b] The functions $\phi:\mkl^n\rightarrow \mkl$ which are
polynomial in the entries and invariant under simultaneous conjugation from
$PL(k,\ell)$ form an $n$-graded ring with \Ps equal to $\sum \bml S_\lm(t_1,\ldots,t_n)$
\item[Problem 4b] Referring to the notation of Problem~4a, let $R(k,\ell;n)$
be the algebra generated by the generic matrices $A_1,\ldots,A_n$ together
with the trace ring $C(k,\ell;n)$.  then $R(k,\ell;n)$ is an $n$-graded ring
with \Ps\ $\sum \bml S_\lm(t_1,\ldots,t_n)$.  More generally, if $\bar{R}(k,\ell;n,m)$
is the algebra generated by $A_1,\ldots,A_n$, $B_1,\ldots,B_m$ and the supertrace ring $\bar{C}(k,\ell;n,m)$, then $\bar{R}(k,\ell;n,m)$ has \Ps\
$$\bar{P}(k,\ell;n,m)=\sum\bml HS_\lm(t_1,\ldots,t_n;u_1,\ldots,u_m).$$
\end{description}
We now turn to the computation of $\bml$.  Given an $S_n$-character $\chi=\sum\alpha_\lm \chi^\lm$, we define $H(\chi)$ to be $\sum \alpha_\lm HS_\lm(Z_0;Z_1)$.  This map respects addition and has two more properties we will need.  First, $H$ respects multiplication in the sense that \begin{equation} H(\chi_1\hat\otimes\chi_2)=H(\chi_1)H(\chi_2).\label{eq:4.1}\end{equation}  This follows from \cite{BReg}.  The second property of $H$ is a restatement of Theorem~\ref{th:1.8}:
\begin{lem} Let $\chi=\sum_{\lm\vdash n}\alpha_\lm \chi^\lm$ be such that $\alpha_\lm=0$ unless $\lm$ is large.  Then $$\langle \chi,\sum_{\mu\in H(k,\ell) }\chi^\mu\otimes \chi^\mu\rangle_{S_n}=
\left\langle \prod_{z_1\in Z_1}(1+z_1)^{-1}
H(\chi),1\right\rangle.$$
\end{lem}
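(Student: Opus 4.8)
The plan is to reduce the statement to Theorem~\ref{th:1.8} by linearity, so almost all of the work has already been done. First I would unwind the right-hand side. By the definition of $H$ and the linearity of the symmetric-function inner product,
$$\left\langle \prod_{z_1\in Z_1}(1+z_1)^{-1} H(\chi),1\right\rangle
=\sum_{\lm\vdash n}\alpha_\lm\left\langle \prod_{z_1\in Z_1}(1+z_1)^{-1} HS_\lm(Z_0;Z_1),1\right\rangle.$$
Since $\alpha_\lm=0$ unless $\lm$ is large, only large $\lm$ contribute to this sum, and for each such $\lm$ Theorem~\ref{th:1.8} (in the $Z_0,Z_1$ reformulation stated right after it) identifies the inner product on the right with $m_\lm$. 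Hence the right-hand side of the lemma equals $\sum_{\lm\vdash n}\alpha_\lm m_\lm$.

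Next I would treat the left-hand side. Because $\chi$ is a character of $S_n$, only the degree-$n$ homogeneous component of $\sum_{\mu\in H(k,\ell)}\chi^\mu\otimes\chi^\mu$ can pair non-trivially with it under $\langle\,\cdot\,,\,\cdot\,\rangle_{S_n}$; that component is $\sum_{\mu\in H(k,\ell;n)}\chi^\mu\otimes\chi^\mu$, which by Problem~1a decomposes as $\sum_{\lm\vdash n} m_\lm\chi^\lm$. Using the orthonormality of the irreducible characters $\{\chi^\lm\}_{\lm\vdash n}$ with respect to $\langle\,\cdot\,,\,\cdot\,\rangle_{S_n}$,
$$\left\langle \sum_{\lm\vdash n}\alpha_\lm\chi^\lm,\ \sum_{\lm\vdash n} m_\lm\chi^\lm\right\rangle_{S_n}
=\sum_{\lm\vdash n}\alpha_\lm m_\lm,$$
so the left-hand side also equals $\sum_{\lm\vdash n}\alpha_\lm m_\lm$, and the two sides agree.

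I do not expect a genuine obstacle here; the one point that must be stated carefully is that the hypothesis ``$\alpha_\lm=0$ unless $\lm$ is large'' is precisely what licenses the term-by-term application of Theorem~\ref{th:1.8}, since that theorem equates $\langle \prod_{z_1\in Z_1}(1+z_1)^{-1}HS_\lm(Z_0;Z_1),1\rangle$ with $m_\lm$ only under the largeness assumption, and for non-large $\lm$ this inner product need not equal $m_\lm$ (indeed by the remark following Theorem~\ref{th:1.8} one only has an inequality in general). Everything else — linearity of $H$ and of the inner product, extraction of the degree-$n$ part, Problem~1a, and orthonormality of symmetric group characters — is routine.
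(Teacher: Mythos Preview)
Your argument is correct and is exactly the paper's approach: the paper's proof consists of the single observation that each side of the equation equals $\sum \alpha_\lm m_\lm$, and you have simply spelled out why.
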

\begin{proof} Each side of the equation equals $\sum \alpha_\lm m_\lm$.
\end{proof}
\begin{thm} For each large $\lm$ the multiplicity $\bml$ equals
$$\left\langle \sum_{z\in Z_0\cup Z_1} z\prod_{z_1\in Z_1}(1+z_1)^{-1}
HS_\lm(Z_0;Z_1),1\right\rangle.$$\label{th:3.2a}
\end{thm}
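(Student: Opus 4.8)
The plan is to mimic the argument for Theorem~\ref{th:1.8}, using the character identity $(\chi^\mu\otimes\chi^\mu)\downarrow = \sum_{\nu}\gamma^{\mu}_{\mu,\nu}\,\chi^{\nu}\hat\otimes\,(\text{something of degree }1)$ that governs $\bar m_\lm$; more precisely, the classical relation $\chi\downarrow_{S_n} = \chi\otimes(\chi^{(n-1,1)}+\chi^{(n)})$ restricted from $S_{n+1}$, so that $\sum_{\mu\in H(k,\ell;n+1)}(\chi^\mu\otimes\chi^\mu)\downarrow$ can be rewritten in terms of the ordinary multiplicities $m_\lm$ paired against the permutation-type character. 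Under the map $H$ this extra tensor factor is exactly what produces the factor $\sum_{z\in Z_0\cup Z_1} z$: the defining character of $M_{k,\ell}$ (the "vector" representation twisted into the adjoint) has hook Schur image $HS_{(1)}(Z_0;Z_1) = \sum_{z\in Z_0\cup Z_1} z$, by the $n=m=1$ case of the theory in~\cite{BReg}, and $H$ respects multiplication by~\eqref{eq:4.1}.

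The key steps, in order, are as follows. First, I would express $\bar m_\lm$ purely in terms of the $m$-type multiplicities: writing $\Psi_n=\sum_{\mu\in H(k,\ell;n)}\chi^\mu\otimes\chi^\mu$, one has $\sum_{\lm\vdash n}\bar m_\lm\chi^\lm = (\Psi_{n+1})\downarrow$, and by Frobenius reciprocity $\bar m_\lm = \langle \chi^\lm\cdot(\chi^{(n)}+\chi^{(n-1,1)}),\,\Psi_{n+1}\rangle_{S_{n+1}}$. Since $\chi^{(n)}+\chi^{(n-1,1)}$ is the natural permutation character $\mathrm{Ind}_{S_{n-1}}^{S_n}\mathbf 1$ viewed appropriately, and the relevant partitions here are all large (the hypothesis), Theorem~\ref{th:1.2} forces every partition entering $\chi^\lm\cdot(\chi^{(n)}+\chi^{(n-1,1)})$ that contributes to be large as well — this is where I invoke largeness, exactly as Lemma~\ref{lem:1.2} does. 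Second, apply the preceding Lemma (the restatement of Theorem~\ref{th:1.8}) to the character $\chi=\chi^\lm\cdot(\chi^{(n)}+\chi^{(n-1,1)})$, which is supported on large partitions, to get $\langle \chi,\Psi_n\rangle_{S_n}=\langle \prod_{z_1\in Z_1}(1+z_1)^{-1}H(\chi),1\rangle$. Third, evaluate $H(\chi)$: using \eqref{eq:4.1}, $H(\chi)=H(\chi^\lm)\cdot H(\chi^{(n)}+\chi^{(n-1,1)}) = HS_\lm(Z_0;Z_1)\cdot\big(\sum_{z\in Z_0\cup Z_1}z\big)$, since $H(\chi^{(n)}+\chi^{(n-1,1)})=HS_{(1)}(Z_0;Z_1)+(\text{degree-}0\text{ piece})$ collapses to $\sum z$ in the two-variable-set formalism. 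Substituting gives precisely the claimed formula.

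The main obstacle is the bookkeeping in the first and third steps: I need to be careful that the "inducing down" is being modeled correctly by the tensor product with the near-permutation character, and that under $H$ the constant term $\chi^{(n)}$ contributes $1$ while $\chi^{(n-1,1)}$ contributes $\sum z - 1$, so that their sum gives exactly $\sum_{z\in Z_0\cup Z_1} z$ with no stray constant — this matches the remark in the introduction that the formula for $\bar m_\lm$ is the one for $m_\lm$ with an extra factor $\sum x_ix_j^{-1}+\sum y_iy_j^{-1}+(\text{cross terms})=\sum_{z\in Z_0\cup Z_1}z$. A secondary subtlety is ensuring that multiplying $HS_\lm(Z_0;Z_1)$ by $\sum z$ keeps us inside the class of functions whose inner product with $1$ is well-defined in the sense of Definition~\ref{def:1.7}; but since $\prod_{z_1\in Z_1}(1+z_1)^{-1}$ is already present, the product is a rational function with the allowed denominator, so the integral formula applies verbatim.
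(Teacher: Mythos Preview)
Your overall strategy---Frobenius reciprocity, then the preceding lemma (the linearized Theorem~\ref{th:1.8}), then the multiplicativity~\eqref{eq:4.1} of $H$---is exactly the paper's. But the character-theoretic step is carried out incorrectly, and the error propagates through the rest.

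Frobenius reciprocity gives $\bml=\langle\chi^\lm,\Psi_{n+1}\!\downarrow\rangle_{S_n}=\langle\chi^\lm\!\uparrow,\Psi_{n+1}\rangle_{S_{n+1}}$. The induced character $\chi^\lm\!\uparrow^{S_{n+1}}$ is the \emph{outer} (induction) product $\chi^\lm\hat\otimes\chi^{[1]}$, where $\chi^{[1]}$ is the trivial character of $S_1$; it is \emph{not} the Kronecker product of $\chi^\lm$ with the permutation character $\chi^{(n)}+\chi^{(n-1,1)}$ of $S_n$. That Kronecker product is a character of $S_n$, not $S_{n+1}$, so your pairing $\langle\chi^\lm\cdot(\chi^{(n)}+\chi^{(n-1,1)}),\Psi_{n+1}\rangle_{S_{n+1}}$ has a degree mismatch and is undefined. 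Moreover,~\eqref{eq:4.1} asserts multiplicativity of $H$ with respect to $\hat\otimes$, not the Kronecker product, so you could not split $H$ over your product in any case. Finally, $H(\chi^{(n)}+\chi^{(n-1,1)})=HS_{(n)}(Z_0;Z_1)+HS_{(n-1,1)}(Z_0;Z_1)$ is homogeneous of degree~$n$, not $\sum_z z$; the claim that ``$\chi^{(n)}$ contributes $1$ while $\chi^{(n-1,1)}$ contributes $\sum z-1$'' is false for $n>1$.

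The repair is immediate once you use the correct description of the induced character:
\[
H(\chi^\lm\!\uparrow)=H(\chi^\lm\hat\otimes\chi^{[1]})=H(\chi^\lm)\,H(\chi^{[1]})=HS_\lm(Z_0;Z_1)\cdot HS_{(1)}(Z_0;Z_1)=HS_\lm(Z_0;Z_1)\sum_{z\in Z_0\cup Z_1}z,
\]
which is the paper's computation. Your concern about largeness is then also handled cleanly: the constituents of $\chi^\lm\!\uparrow$ lying in $H(\hook)$ are automatically large (adding a box cannot move a partition into a strictly smaller hook), and any constituent outside $H(\hook)$ contributes zero to both sides of the lemma.
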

\begin{proof}  By definition, $\bml=\langle \chi^\lm, \sum(\chi^\mu\otimes\chi^\mu)\downarrow\rangle_{S_{n}}$, summed over $\mu\in H(k,\ell;n+1)$.  By Froebenius reciprocity this equals $\langle \chi^\lm\uparrow, \sum\chi^\mu\otimes\chi^\mu\rangle_{S_{n+1}}$ and $\chi^\lm\uparrow$ equals $\chi^{[1]}\hat\otimes\chi^\lm$.
Applying the previous lemma we get $$\bml=\left\langle \prod_{z_1\in Z_1}(1+z_1)^{-1}H(\chi^{[1]}\hat\otimes\chi^\lm),1\right\rangle.$$
By \eqref{eq:4.1}
\begin{align*} H(\chi^{[1]}\hat\otimes\chi^\lm)&=H(\chi^{[1]})H(\chi^\lm)\\
&=HS_{[1]}(Z_0;Z_1)HS_\lm(Z_0;Z_1)\\&= (\sum_{z\in Z_0\cup Z_1}z) HS_\lm(Z_0;Z_1).
\end{align*}
The theorem now follows.
\end{proof}
Theorem \ref{th:3.2a} easily implies analogues of Theorems \ref{th:2.1} and \ref{th:2.4}, and the former implies an analogue of Corollary~\ref{cor:2.2}.  The \Ps\ we study
are $\bar{T}(k,\ell;a,b)$ which equals the sum $\sum \bar{m}_\lm S_{\alpha(\lm)}(T)
S_{\beta(\lm)}(U)$ summed over typical~$\lm$; and $\bar{P}'(k,\ell;a,b)$ which equals the sum $\sum \bar{m}_\lm' HS_\lm(T;U)$, where $\bar{m}_\lm'$ is the inner product in Theorem~\ref{th:3.2a}.   Here are the results.
\begin{thm} $\bar{T}(k,\ell;a,b)$ equals $(k!)^{-1}(\ell!)^{-1}(2\pi i)^{k+\ell}$ time the integral of
\begin{align*} &\sum_{z\in Z_0\cup Z_1} z \prod(1+z_1)^{-1}\prod(z_0+z_1) \prod (1-az_0)^{-1} \\ &\prod (1-bz_1)^{-1} \prod_{i\ne j}(1-x_ix_j^{-1})\prod_{i\ne j}
(1-y_i y_j^{-1}) \frac{dx_1}{x_1}\wedge\cdots\wedge\frac{dy_b}{y_b}
\end{align*}
over the complex torus $|x_i|=1$,  $|y_i|=1$,  where the $a,b,z_0,z_1$ run over $A,B,Z_0$ and $Z_1$, respectively.\label{th:2.1a}
\end{thm}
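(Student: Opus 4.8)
The plan is to follow the same route used to prove Theorem~\ref{th:2.1}, but starting from Theorem~\ref{th:3.2a} in place of Theorem~\ref{th:1.8}. First I would recall that for typical $\lm$ the contribution $\bar m_\lm S_{\alpha(\lm)}(A)S_{\beta(\lm)}(B)$ is, by Theorem~\ref{th:3.2a}, the inner product with~$1$ of
$$\sum_{z\in Z_0\cup Z_1} z \prod_{z_1\in Z_1}(1+z_1)^{-1} HS_\lm(Z_0;Z_1)S_{\alpha(\lm)}(A)S_{\beta(\lm)}(B).$$
The factor $\sum_{z\in Z_0\cup Z_1}z$ does not depend on $\lm$, so it may be pulled outside the sum over typical~$\lm$.

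The second step is to sum over all typical~$\lm$. Here I would invoke identity~\eqref{eq:3}, applied with the four sets of variables $Z_0$, $Z_1$, $A$, $B$ playing the roles of $A$, $B$, $C$, $D$: summing $HS_\lm(Z_0;Z_1)S_{\alpha(\lm)}(A)S_{\beta(\lm)}(B)$ over typical~$\lm$ yields $\prod(z_0+z_1)\prod(1-az_0)^{-1}\prod(1-bz_1)^{-1}$, where $z_0$ runs over $Z_0$, $z_1$ over $Z_1$, $a$ over $A$, $b$ over $B$. Multiplying back in the two $\lm$-independent factors, $\bar T(k,\ell;a,b)$ is the inner product with~$1$ of
$$\sum_{z\in Z_0\cup Z_1} z \prod(1+z_1)^{-1}\prod(z_0+z_1)\prod(1-az_0)^{-1}\prod(1-bz_1)^{-1}.$$

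The final step is to rewrite this inner product as the complex integral of Definition~\ref{def:1.7}. Since $|X|=k$ and $|Y|=\ell$, the $\Delta X$ and $\Delta Y$ factors become $\prod_{i\ne j}(1-x_ix_j^{-1})$ and $\prod_{i\ne j}(1-y_iy_j^{-1})$, the normalizing constant is $(k!)^{-1}(\ell!)^{-1}(2\pi i)^{-(k+\ell)}$ (so the claimed prefactor is its reciprocal, consistent with the phrasing of Theorem~\ref{th:2.1}), and the measure is $\frac{dx_1}{x_1}\wedge\cdots\wedge\frac{dy_b}{y_b}$ — here one should note, as in Theorem~\ref{th:2.1}, that the integrand is already expressed in terms of $X,Y$ via $Z_0=XX^{-1}\cup YY^{-1}$ and $Z_1=XY^{-1}\cup YX^{-1}$, so ``$g(X^{-1},Y^{-1})$'' in the definition is absorbed into the symmetry of these substitution sets under $x_i\mapsto x_i^{-1}$, $y_j\mapsto y_j^{-1}$. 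Assembling these pieces gives exactly the stated formula. The only real point requiring care — the ``main obstacle'', such as it is — is checking that identity~\eqref{eq:3} applies verbatim with this choice of variable sets and that multiplying through by the degree-zero Laurent polynomial $\sum_{z\in Z_0\cup Z_1}z$ does not disturb the convergence or the extraction-of-constant-term interpretation of the inner product; both are routine given that Theorem~\ref{th:2.1} has already been established by the identical argument without that extra factor.
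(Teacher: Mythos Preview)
Your proposal is correct and is precisely the route the paper intends: it states that Theorem~\ref{th:3.2a} ``easily implies analogues of Theorems~\ref{th:2.1} and~\ref{th:2.4}'' and gives no further argument, so your plan of rerunning the proof of Theorem~\ref{th:2.1} with Theorem~\ref{th:3.2a} in place of Theorem~\ref{th:1.8}, pulling the $\lm$-free factor $\sum_{z\in Z_0\cup Z_1}z$ outside the sum, applying~\eqref{eq:3}, and then passing to the integral via Definition~\ref{def:1.7}, is exactly what is meant.
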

\begin{cor} $\bar{T}(k,\ell;a,b)$ is a rational function.  The denominator can be written as a product of terms of
the form $(1-m)$ where $m$ is a monomial of degree at
most $k+\ell$, and of even degree in $U$.  If $a,b$ are so large that the integral
in Theorem~\ref{th:2.1a} has no poles at~0, then $T(k,\ell;a,b)$
satisfies the functional equation
\begin{multline*}T(t_1^{-1},\ldots,t_a^{-1};u_1^{-1},\ldots,u_b^{-1})
=\\ (-1)^{(a-1)(k+\ell)+1}(t_1\cdots t_a)^{k^2+\ell^2}(u_1\cdots u_b)^{2k\ell}T(t_1,\ldots,
u_b)\end{multline*}
\label{cor:2.2a}
\end{cor}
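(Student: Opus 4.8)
The plan is to reuse the argument for Corollary~\ref{cor:2.2} essentially verbatim. Comparing the integrand of Theorem~\ref{th:2.1a} with that of Theorem~\ref{th:2.1}, the only difference is the single extra factor $\sum_{z\in Z_0\cup Z_1}z$, and I claim this factor changes neither conclusion. Expanding the two sets,
\begin{equation*}\sum_{z\in Z_0\cup Z_1}z=\Bigl(\sum_i x_i+\sum_j y_j\Bigr)\Bigl(\sum_i x_i^{-1}+\sum_j y_j^{-1}\Bigr),\end{equation*}
so this is a Laurent polynomial in the integration variables $x_1,\ldots,x_k,y_1,\ldots,y_\ell$ that is homogeneous of degree~$0$, being the product of a degree~$1$ and a degree~$-1$ Laurent polynomial. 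In the proof of Corollary~\ref{cor:2.2} the integrand of Theorem~\ref{th:2.1} is rewritten in the form treated by Van Den Bergh, a degree~$0$ Laurent polynomial in the integration variables times a product of reciprocals of terms $1-rc$ with $r$ a ratio of two integration variables and $c$ a free variable --- this step uses that $\prod_{z_1\in Z_1}(1+z_1)$ divides evenly into $\prod(z_0+z_1)$, as noted before Corollary~\ref{cor:2.2}. Multiplying the degree~$0$ Laurent polynomial occurring there by our degree~$0$ factor $\sum_{z\in Z_0\cup Z_1}z$ again gives a degree~$0$ Laurent polynomial, so Van Den Bergh's results apply to the integrand of Theorem~\ref{th:2.1a} without modification. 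This yields the rationality of $\bar T(k,\ell;a,b)$ and a denominator of exactly the shape asserted in Corollary~\ref{cor:2.2} --- a product of factors $(1-m)$ with $m$ a monomial of degree at most $k+\ell$ and of even degree in $U$ --- for the same reasons.

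For the functional equation I would, following the proof of Corollary~\ref{cor:2.2}, replace every $t_\alpha$ by $t_\alpha^{-1}$ and every $u_\beta$ by $u_\beta^{-1}$ in the integral of Theorem~\ref{th:2.1a} and then apply the change of variables $x_i\mapsto x_i^{-1}$, $y_j\mapsto y_j^{-1}$ on the torus, the orientation reversal of each circle cancelling against the sign picked up by the measure $dx_i/x_i\mapsto -dx_i/x_i$. Every factor common to the two integrands transforms exactly as in the proof of Corollary~\ref{cor:2.2}, producing the sign $(-1)^{(a-1)(k+\ell)+1}$ and the prefactor $(t_1\cdots t_a)^{k^2+\ell^2}(u_1\cdots u_b)^{2k\ell}$. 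The one new factor $\sum_{z\in Z_0\cup Z_1}z$ contains no $t$'s or $u$'s and is invariant under $x_i\mapsto x_i^{-1}$, $y_j\mapsto y_j^{-1}$ --- the substitution merely interchanges the two parenthesised sums displayed above, equivalently it permutes $Z_0\cup Z_1$ --- so it contributes nothing to the sign or the prefactor, and the resulting functional equation is identical to that of Corollary~\ref{cor:2.2}. As there, the hypothesis that $a,b$ be large enough for the integral of Theorem~\ref{th:2.1a} to have no poles at~$0$ is what is needed to pass from the integral identity to a functional equation of rational functions; since $\sum x_i^{-1}+\sum y_j^{-1}$ introduces poles only at $x_i=0$ and $y_j=0$, this point is already subsumed under that hypothesis, though possibly at a larger threshold for $a,b$ than in the case of~$T$.

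The main obstacle, such as it is, is the sign and monomial bookkeeping in the change of variables; but that computation is word for word the one carried out for Corollary~\ref{cor:2.2}, and the only genuinely new ingredient has been seen to be a degree~$0$, inversion-invariant factor, so nothing essentially new must be checked. The one small thing to verify with care is the degree~$0$ homogeneity of $\sum_{z\in Z_0\cup Z_1}z$ in the integration variables, which is what keeps Van Den Bergh's hypotheses satisfied; this is immediate from the factorization displayed above.
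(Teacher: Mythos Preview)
Your proposal is correct and matches the paper's (implicit) approach: the paper gives no proof at all for this corollary, simply stating it as an analogue of Corollary~\ref{cor:2.2} obtained from Theorem~\ref{th:2.1a} and Van Den Bergh's results. The one substantive check needed --- that the extra factor $\sum_{z\in Z_0\cup Z_1}z$ is a degree-$0$ Laurent polynomial invariant under inversion of the integration variables, hence preserves Van Den Bergh's hypotheses and the functional-equation computation --- is exactly what you verify via the factorization $\bigl(\sum x_i+\sum y_j\bigr)\bigl(\sum x_i^{-1}+\sum y_j^{-1}\bigr)$.
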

\begin{thm}\label{th:2.4a} $\bar{P}'(k,\ell;a,b)$ equals $(k!)^{-1}(\ell!)^{-1}(2\pi i)^{k+\ell}$ time the integral of
\begin{align*} & \sum_{z\in Z_0\cup Z_1} z\prod(1+z_1)^{-1}
\prod(1+z_0b)\prod(1+z_1a)\prod(1-z_0a)^{-1} \\ &
\prod(1-z_1b)^{-1} \prod_{i\ne j}(1-x_ix_j^{-1})\prod_{i\ne j}
(1-y_i y_j^{-1}) \frac{dx_1}{x_1}\wedge\cdots\wedge\frac{dy_b}{y_b}
\end{align*}
\end{thm}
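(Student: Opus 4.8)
The plan is to run the (omitted) proof of Theorem~\ref{th:2.4} again, carrying the one extra factor $\sum_{z\in Z_0\cup Z_1}z$ along at every step. This should work because Theorem~\ref{th:3.2a} is obtained from Theorem~\ref{th:1.8} precisely by inserting that factor inside the inner product --- which is exactly what the identity $HS_{[1]}(Z_0;Z_1)=\sum_{z}z$ accomplishes in the proof of Theorem~\ref{th:3.2a} --- so one expects $\bar P'$ to arise from $P'$ by the same insertion.

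First I would record, directly from the definition of $\bar{m}_\lm'$ as the inner product appearing in Theorem~\ref{th:3.2a}, that for every partition $\lm$
$$\bar{m}_\lm'\,HS_\lm(T;U)=\left\langle HS_\lm(T;U)\Bigl(\sum_{z\in Z_0\cup Z_1}z\Bigr)\prod_{z_1\in Z_1}(1+z_1)^{-1}HS_\lm(Z_0;Z_1),\,1\right\rangle ,$$
where the bracket is the inner product in the variables $X$ ($|X|=k$) and $Y$ ($|Y|=\ell$), with $T$ and $U$ mere parameters through which the scalar $HS_\lm(T;U)$ passes freely. Next I would sum over all $\lm$: the factors $\sum_{z}z$ and $\prod_{z_1\in Z_1}(1+z_1)^{-1}$ do not involve $\lm$ and come outside, and since $HS_\lm(Z_0;Z_1)=0$ unless $\lm\in H(\hook)$ while $HS_\lm(T;U)=0$ unless $\lm\in H(a,b)$, in each fixed multidegree of the $T,U$ variables only finitely many $\lm$ contribute, so the sum may be brought inside the (linear, coefficient-extracting) bracket. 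What remains inside is $\sum_\lm HS_\lm(Z_0;Z_1)\,HS_\lm(T;U)$, and the theorem of Berele and Remmel quoted just before Theorem~\ref{th:2.4}, applied with its four sets of variables taken to be $Z_0$, $Z_1$, $T$, $U$, collapses this to
$$\prod(1+z_0b)\prod(1+z_1a)\prod(1-z_0a)^{-1}\prod(1-z_1b)^{-1} ,$$
with $a$ ranging over $T$ and $b$ over $U$. Thus $\bar P'(k,\ell;a,b)$ is the inner product with $1$ of the product of this expression with $\bigl(\sum_{z}z\bigr)\prod_{z_1\in Z_1}(1+z_1)^{-1}$. Finally I would rewrite that inner product as the torus integral of Definition~\ref{def:1.7} with $n=k$ and $m=\ell$: the weight $(\Delta X)(\Delta Y)$ becomes $\prod_{i\ne j}(1-x_ix_j^{-1})\prod_{i\ne j}(1-y_iy_j^{-1})$, the constant becomes $(k!)^{-1}(\ell!)^{-1}(2\pi i)^{k+\ell}$ in the normalization already used in Theorems~\ref{th:2.1} and~\ref{th:2.4}, and the integrand is then exactly the one in the statement.

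I expect the main point that genuinely requires an argument --- and it is the point the proof of Theorem~\ref{th:2.4} leaves tacit --- to be the interpretation of, and the sum--integral interchange for, the factor $\prod_{z_1\in Z_1}(1+z_1)^{-1}$, which here (in contrast to the $\bar T$ case, where it is absorbed by $\prod(z_0+z_1)$) is not cancelled and so introduces poles on the torus $|x_i|=|y_j|=1$. One handles this through the formalism of Definition~\ref{def:1.7}, under which the inner product of arbitrary functions of $X$ and $Y$ is declared; it is reassuring that for every \emph{large} $\lm$ the function $\prod_{z_1\in Z_1}(1+z_1)^{-1}HS_\lm(Z_0;Z_1)$ is in fact an honest Laurent polynomial, by Lemma~\ref{lem:1.6}. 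With everything organized as a formal power series in $T$ and $U$, each coefficient of which reduces to a finite sum over $\lm$, the interchange above is then legitimate, completing the proof.
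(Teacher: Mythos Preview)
Your proof is correct and is exactly the argument the paper has in mind: the paper does not prove Theorem~\ref{th:2.4a} separately but simply remarks that Theorem~\ref{th:3.2a} ``easily implies analogues of Theorems~\ref{th:2.1} and~\ref{th:2.4},'' i.e., one reruns the argument for $P'$ with the extra factor $\sum_{z\in Z_0\cup Z_1}z$ inserted, which is precisely what you do. Your explicit discussion of the sum--integral interchange and of the $\prod(1+z_1)^{-1}$ poles goes a bit beyond what the paper says, but is in the same spirit as Definition~\ref{def:1.7}.
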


\section{Examples}
\subsection{Case of $\lm$ one row or one column}
We first compute $m_\lm$ for $\lm=(n)$ and $\lm=(1^n)$ and $k,\ell$ arbitrary.  Such $\lm$ will be neither large nor typical and we compute it directly from the inner product definition $m_\lm=\sum \chi^\lm\otimes\chi^\lm$ summed over $\lm\in H(k,\ell;n)$.
\begin{lem}  With $\gamma$ as in Notation~\ref{not:1.1},  $\gamma^{(n)}_{\mu,\nu}=\delta_{\mu,\nu}$ and $\gamma^{(1^n)}_{\mu,\nu}=\delta_{\mu,\nu'}$\label{lem:3.1}
\end{lem}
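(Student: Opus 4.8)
The plan is to reduce everything to the defining relation for $\glmn$ in Notation~\ref{not:1.1} together with the orthonormality of the irreducible $S_n$-characters. Since $\chi^\mu\otimes\chi^\nu=\sum_\lm\glmn\chi^\lm$ and the $\chi^\lm$ are orthonormal, we have $\glmn=\langle\chi^\mu\otimes\chi^\nu,\chi^\lm\rangle_{S_n}$, and it then remains only to evaluate this inner product for $\lm=(n)$ and $\lm=(1^n)$, using that these are the trivial and the sign character of $S_n$.

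For $\lm=(n)$: because $\chi^{(n)}(\sigma)=1$ for every $\sigma\in S_n$ and symmetric group characters are real-valued, $\langle\chi^\mu\otimes\chi^\nu,\chi^{(n)}\rangle = \tfrac1{n!}\sum_{\sigma\in S_n}\chi^\mu(\sigma)\chi^\nu(\sigma) = \langle\chi^\mu,\chi^\nu\rangle_{S_n} = \delta_{\mu,\nu}$. For $\lm=(1^n)$: writing $\epsilon=\chi^{(1^n)}$ for the sign character and using the classical identity $\epsilon\cdot\chi^\mu=\chi^{\mu'}$, we get $\langle\chi^\mu\otimes\chi^\nu,\epsilon\rangle = \tfrac1{n!}\sum_{\sigma\in S_n}\bigl(\epsilon(\sigma)\chi^\mu(\sigma)\bigr)\chi^\nu(\sigma) = \langle\chi^{\mu'},\chi^\nu\rangle_{S_n} = \delta_{\mu',\nu} = \delta_{\mu,\nu'}$. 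That finishes both cases.

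There is essentially no obstacle here; the only point that needs care is convention-checking, namely confirming that $\chi^{(n)}$ is the trivial and $\chi^{(1^n)}$ the sign representation (and not vice versa), and that $\epsilon\cdot\chi^\mu=\chi^{\mu'}$ is oriented compatibly with the ordering of the indices $\mu,\nu$ in the definition of $\glmn$. As an alternative one could avoid characters altogether: substituting $\lm=(n)$ and $\lm=(1^n)$ into the classical expansion $S_\lm(XY)=\sum\glmn S_\mu(X)S_\nu(Y)$ recalled before Theorem~\ref{th:1.5}, and noting $S_{(n)}(XY)=h_n(XY)$ and $S_{(1^n)}(XY)=e_n(XY)$, Cauchy's identity and its dual (see \cite{mac}) give $h_n(XY)=\sum_\mu S_\mu(X)S_\mu(Y)$ and $e_n(XY)=\sum_\mu S_\mu(X)S_{\mu'}(Y)$; comparing coefficients of $S_\mu(X)S_\nu(Y)$ then yields $\gamma^{(n)}_{\mu,\nu}=\delta_{\mu,\nu}$ and $\gamma^{(1^n)}_{\mu,\nu}=\delta_{\mu,\nu'}$ respectively.
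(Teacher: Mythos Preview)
Your main argument is correct and is essentially the paper's proof: both use that $\chi^{(n)}$ is the trivial character and $\chi^{(1^n)}\otimes\chi^\mu=\chi^{\mu'}$, together with orthonormality of irreducibles; the only cosmetic difference is that the paper phrases the step $\gamma^{(n)}_{\mu,\nu}=\gamma^{\mu}_{(n),\nu}$ as an appeal to the symmetry of $\gamma^\lm_{\mu,\nu}$ in its three indices, whereas you compute the triple inner product directly. Your alternative via $S_\lm(XY)$ and the Cauchy/dual Cauchy identities is a genuinely different (and equally valid) route not taken in the paper.
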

\begin{proof} Referring to I.7 of \cite{mac}, $\chi^{(n)}\otimes\chi^\mu=\chi^\mu$ for any $\mu$ and so $\gamma^\mu_{(n),\nu}=\delta_{\mu,\nu}$.  But, considered as a function of $\lm,\mu,\nu$,  $\gamma^\lm_{\mu,\nu}$ is symmetric and so $\gamma^{(n)}_{\mu,\nu}=\delta_{\mu,\nu},$ as claimed.  The second statement follows similarly using the identity $\chi^{(1^n)}\otimes\chi^\mu=\chi^{\mu'}$ from example 2 in section I.7 of~\cite{mac}.
\end{proof}
\begin{thm}  $m_{(n)}$ equals the number of partitions in $H(k,\ell;n)$ and $m_{(1^n)}$ equals the number of self-conjugate partitions in $H(k,\ell;n)$.  In particular, $m_{(n)}\ne m_{(1^n)}$ and so $m_\lm\chi^\lm$ is not symmetric under conjugation.
\end{thm}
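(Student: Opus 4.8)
The plan is to evaluate both multiplicities directly from the combinatorial formula~\eqref{eq:1.05} of Notation~\ref{not:1.1} together with Lemma~\ref{lem:3.1}, and then to exhibit a single degree $n$ at which the two values differ. For $\lm\vdash n$, equation~\eqref{eq:1.05} reads $m_\lm=\sum_\mu\gamma^\lm_{\mu,\mu}$, with $\mu$ ranging over the partitions of $n$ in $H(k,\ell;n)$. Taking $\lm=(n)$ and substituting $\gamma^{(n)}_{\mu,\mu}=\delta_{\mu,\mu}=1$ from Lemma~\ref{lem:3.1} gives $m_{(n)}=|H(k,\ell;n)|$, the number of partitions of $n$ in the hook. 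Taking instead $\lm=(1^n)$ and substituting $\gamma^{(1^n)}_{\mu,\mu}=\delta_{\mu,\mu'}$ gives $m_{(1^n)}=\#\{\mu\in H(k,\ell;n):\mu=\mu'\}$, the number of self-conjugate partitions of $n$ in the hook. This establishes the first two assertions of the theorem.

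For the last assertion, note that the self-conjugate partitions in $H(k,\ell;n)$ form a subset of all partitions of $n$ in $H(k,\ell;n)$, so $m_{(1^n)}\le m_{(n)}$ for every $n$. To see that the inequality is strict for a suitable $n$, fix any $n\ge 2$. Then $(n)\in H(k,\ell;n)$, since the $(k+1)$st part of $(n)$ is $0\le\ell$, while $(n)$ is plainly not self-conjugate; hence the inclusion of self-conjugates is proper and $m_{(1^n)}<m_{(n)}$. Since $(1^n)=(n)'$, this produces a $\lm$ with $m_\lm\ne m_{\lm'}$, so $\sum_\lm m_\lm\chi^\lm$ is not fixed by the conjugation involution $\chi^\lm\mapsto\chi^{\lm'}$; that is, $m_\lm\chi^\lm$ is not symmetric under conjugation.

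I do not foresee a real obstacle; the only delicate point is that the strict inequality $m_{(n)}\ne m_{(1^n)}$ does fail at $n=1$, where $H(k,\ell;1)=\{(1)\}$ and $(1)=(1)'$, so the statement should be read as holding for a suitable $n$ --- any $n\ge 2$ will do --- which is all that is needed to break the symmetry. One should also keep track of the tacit hypothesis $k\ge 1$ used above to place $(n)$ in the hook; in any degenerate range where it fails, the same reasoning applies with $(1^n)$ in the role of $(n)$, since then $(1^n)\in H(k,\ell;n)$ and $(1^n)$ is not self-conjugate for $n\ge 2$.
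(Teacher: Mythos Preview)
Your argument is correct and follows exactly the same route as the paper: apply equation~\eqref{eq:1.05} together with Lemma~\ref{lem:3.1} to reduce $m_{(n)}$ and $m_{(1^n)}$ to counts of partitions (respectively self-conjugate partitions) in $H(k,\ell;n)$. Your additional paragraph justifying the strict inequality $m_{(n)}\ne m_{(1^n)}$ for $n\ge 2$ simply makes explicit what the paper leaves to the reader.
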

\begin{proof}
By definition $m_{(n)}=\sum \gamma^{(n)}_{\mu,\mu}$, summed over $\mu\in H(k,\ell;n)$; and by lemma~\ref{lem:3.1} each $\gamma^{(n)}_{\mu,\mu}$ equals~1 and so the sum is $|H(k,\ell;n)|$.  The case of $m_{(1^n)}$ is similar, with $\gamma^{(1^n)}_{\mu,\mu}$ equaling~1 if $\mu$ is self-conjugate and~0 otherwise.
\end{proof}

\begin{cor} Let $f(x)=P(1,1;0,1)$, $g(x)=P(2,2;1,0)$, and let $h(x)$ equal either
$f(x)$ or $g(x)$.  Then $h(x)$ does not satisfy a functional equation $h(x^{-1})=\pm x^a h(x)$.\label{cor:4.3}
\end{cor}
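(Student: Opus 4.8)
The plan is to evaluate $f$ and $g$ explicitly as rational functions and then observe that a functional equation $h(x^{-1})=\pm x^a h(x)$ (with $a\in\mathbb Z$) would force $h(x^{-1})=h(x)$, which I can check fails in both cases. The first step is a reduction to one-variable series. Since $HS_\lm$ with empty odd alphabet is the ordinary Schur function $s_\lm$ and $HS_\lm$ with empty even alphabet is $s_{\lm'}$ (see \cite{BReg}), and since in one variable $s_{(m)}(x)=x^m$ while $s_\mu(x)=0$ for $\mu$ with more than one row, the series $P(2,2;1,0)$ and $P(1,1;0,1)$ collapse onto a single row, respectively a single column:
$$g(x)=P(2,2;1,0)=\sum_{m\ge0}m_{(m)}\,x^m,\qquad f(x)=P(1,1;0,1)=\sum_{m\ge0}m_{(1^m)}\,x^m,$$
where $x$ denotes the unique variable. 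By the theorem preceding this corollary, $m_{(m)}=|H(2,2;m)|$ and $m_{(1^m)}$ is the number of self-conjugate partitions in $H(1,1;m)$.

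For $f$ this is immediate: the partitions in $H(1,1;m)$ are the hooks $(a,1^b)$ with $a\ge1$, $b\ge0$, $a+b=m$ (and the empty partition when $m=0$), and such a hook is self-conjugate exactly when $a=b+1$; so $m_{(1^m)}=1$ when $m=0$ or $m$ is odd, and $m_{(1^m)}=0$ when $m>0$ is even. Hence $f(x)=1+\sum_{j\ge0}x^{2j+1}=\frac{1+x-x^2}{1-x^2}$. For $g$ I would decompose a partition $\lm$ with $\lm_3\le2$ according to the number $r\in\{0,1,2\}$ of parts of $\lm$ that are $\ge3$: the subpartition formed by those parts contributes $1$, $\frac{x^3}{1-x}$, or $\frac{x^6}{(1-x)(1-x^2)}$ to the generating function respectively, and the remaining parts form an arbitrary partition into parts $\le2$, contributing $\frac{1}{(1-x)(1-x^2)}$. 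Adding the three (disjoint) cases,
$$g(x)=\frac{1}{(1-x)(1-x^2)}+\frac{x^3}{(1-x)^2(1-x^2)}+\frac{x^6}{(1-x)^2(1-x^2)^2}=\frac{1-x-x^2+2x^3-x^5+x^6}{(1-x)^2(1-x^2)^2}.$$
As a check, the coefficients are $1,1,2,3,5,7,11,15,22,29,\dots$, agreeing with the partition number $p(m)$ for $m\le8$ and equal to $p(9)-1$ for $m=9$, the missing partition being $(3,3,3)$.

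Finally I would rule out the functional equation. Both $f$ and $g$ are rational functions that are analytic and nonzero at $x=0$ and at $x=\infty$, with $f(0)=f(\infty)=g(0)=g(\infty)=1$. If $h(x^{-1})=\epsilon\,x^a h(x)$ held for $h\in\{f,g\}$, some sign $\epsilon$ and some integer $a$, then letting $x\to0$ the left side tends to $h(\infty)=1$ while the right side tends to $\epsilon\cdot(\lim_{x\to0}x^a)\cdot h(0)$, which is $0$ if $a>0$, unbounded if $a<0$, and $\epsilon$ if $a=0$; so necessarily $a=0$ and $\epsilon=1$, i.e. $h(x^{-1})=h(x)$. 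But a short calculation gives
$$f(x)-f(x^{-1})=\frac{2x}{1-x^2}\ne0,\qquad g(x)-g(x^{-1})=\frac{-x^2}{(1-x)^2(1-x^2)}\ne0,$$
a contradiction. The only genuine computation here is the generating function for $|H(2,2;m)|$ together with the verification that its numerator $1-x-x^2+2x^3-x^5+x^6$ is neither palindromic nor anti-palindromic; I expect the bookkeeping in the $H(2,2)$ enumeration — in particular keeping the $r=1$ and $r=2$ contributions disjoint and correctly normalized — to be the one place demanding care, everything else following directly from the quoted theorem.
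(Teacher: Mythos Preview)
Your argument is correct and follows the same overall strategy as the paper: compute $f$ and $g$ explicitly as rational functions and then check that the functional equation fails. Two minor differences are worth noting. First, for $g(x)=\sum_{m}|H(2,2;m)|x^m$ the paper splits $H(2,2)$ into the typical partitions (those containing the $2\times2$ box) and the non-typical ones, which are exactly $H(1,1)$, obtaining
\[
g(x)=1+\frac{x}{(1-x)^2}+\frac{x^4}{(1-x)^2(1-x^2)^2},
\]
whereas you split according to the number $r\in\{0,1,2\}$ of parts $\ge3$; both decompositions are elementary and give the same sum. In fact your simplified numerator $1-x-x^2+2x^3-x^5+x^6$ is the correct one---the numerator $1-x+x^2+x^3-x^4$ printed in the paper is an arithmetic slip (it already fails at the $x^2$ coefficient, giving $4$ rather than $|H(2,2;2)|=2$). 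Second, the paper simply asserts that the functional equation visibly fails, while you supply the clean reduction (via the finite nonzero values $h(0)=h(\infty)=1$) to $a=0$, $\epsilon=+1$ and then exhibit $h(x)-h(x^{-1})\ne0$; this extra step is a genuine improvement in rigor.
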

\begin{proof}A partition $\lm\in H(1,1)$ is self-conjugate if and only if $\lm =[0]$ or $\lm=[a+1,1^a]$ for some $a\ge 0$.  Hence, $$f(x)=1+\sum x^{2a+1} =1+\frac x{1-x^2}=\frac{1+x-x^2}{1-x^2}.$$

For $g(x)$, note that if $\lm\in H(2,2)$, either $\lm$ is typical or $\lm\in H(1,1)$.  If $\lm$ is typical, then $|\lm|=4+|\alpha(\lm)|+|\beta(\lm)|$, and
since $\alpha(\lm)$ and $\beta(\lm)$ are partitions of height at most~2, it follows
that $$\sum_{\lm\in H'(2,2)}t^{|\lm|}=\frac{t^4}{[(1-t)(1-t^2)]^2}.$$
If $\lm\in H(1,1)$,then either $\lm=[0]$ or $\lm$ is typical.  It follows that
$$\sum_{\lm\in H(1,1)}t^{|\lm|}=1+\frac x{(1-x)^2}.$$
Adding, we get
\begin{align*}
g(x)&=1+\frac x{(1-x)^2}+\frac {x^4}{(1-x)^2(1-x^2)^2}\\
         &=\frac{1-x+x^2+x^3-x^4}{(1-x)^2(1-x^2)^2}.
         \end{align*}
  It is now easy to see that neither $f(x)$ not $g(x)$ satisfy a functional equation
  of the indicated type.
  \end{proof}
\subsection{The case of $(k,\ell)=(1,1)$}
We now turn to the $k=\ell=1$ case.  This case was computed directly by Regev in~\cite{R} and  Remmel in~\cite{remmel}, so this computation is mostly a check on Theorem~\ref{th:2.1}.  In this case $X=\{x\}$ and $Y=\{y\}$, and so $Z_0=\{1,1\}$ and $Z_1=\{\frac xy, \frac yx \}$.  Hence, after canceling the $(1+\frac xy)(1+\frac yx)$ in the denominator with one  of the two in the numerator, the
integrand in Theorem~\ref{th:2.1} equals
\begin{equation}
 (1+\frac xy)(1+\frac yx)\prod_i(1-t_i)^{-2} \prod_j(1-\frac xy u_j)^{-1}(1-\frac yx u_j)^{-1}
\end{equation}
Since $(\hook)=(2,2)$, we will take the case of two $t$'s and two $u$'s.  The first integral
is by $\frac{dx}x$ and the poles inside the circle are at $x=yu_1$ and $x=yu_2$.  Pulling
out the factor of $(1-t_1)^2(1-t_2)^2$, the pole at $yu_1$ equals:
$$\frac{(1+u_1^{-1})(1+u_1)}{(1-u_1u_2)(1-u_2u_1^{-1})
(1-u_1^2)}$$
which simplifies to
$$\frac{(1+u_1)}{(1-u_1u_2)(u_1-u_2)(1-u_1)}.$$
Integrating by $\frac{dy}y$ leaves this as it is.  Similarly, if we use the pole at $x=yu_2$ we
simply switch $u_1$ and $u_2$ yielding
$$\frac{(1+u_2)}{(1-u_1u_2)(u_2-u_1)(1-u_2)}.$$
Adding these two fractions and multiplying back the $(1-t_1)^2(1-t_2)^2$ we factored out  we get $T(1,1;2,2)$.
\begin{thm}\label{th:3.3} $T(1,1;2,2)=$
$$\frac2{(1-t_1)^2(1-t_2)^2(1-u_1)(1-u_2)(1-u_1u_2)}.$$
\end{thm}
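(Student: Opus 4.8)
The plan is to take the two partial fractions displayed just before the statement, add them, and simplify. Concretely, I would start from
$$\frac{(1+u_1)}{(1-u_1u_2)(u_1-u_2)(1-u_1)}+\frac{(1+u_2)}{(1-u_1u_2)(u_2-u_1)(1-u_2)},$$
pull out the common factor $\frac1{1-u_1u_2}$, and put the remaining two terms over the common denominator $(u_1-u_2)(1-u_1)(1-u_2)$, being careful with the sign flip $u_2-u_1=-(u_1-u_2)$. The numerator then becomes $(1+u_1)(1-u_2)-(1+u_2)(1-u_1)$. Expanding, $(1+u_1)(1-u_2)=1-u_2+u_1-u_1u_2$ and $(1+u_2)(1-u_1)=1-u_1+u_2-u_1u_2$, so the difference is $2u_1-2u_2=2(u_1-u_2)$. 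This cancels the factor $(u_1-u_2)$ in the denominator, leaving
$$\frac{2}{(1-u_1u_2)(1-u_1)(1-u_2)}.$$

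Next I would multiply back the factor $(1-t_1)^2(1-t_2)^2$ that was divided out before the partial-fraction computation — here it reappears in the denominator — to obtain exactly
$$T(1,1;2,2)=\frac{2}{(1-t_1)^2(1-t_2)^2(1-u_1)(1-u_2)(1-u_1u_2)},$$
which is the claimed formula. The only genuinely substantive input is Theorem~\ref{th:2.1} together with the residue computation already carried out in the text; given those, the rest is the elementary algebra above.

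The main obstacle, such as it is, is bookkeeping rather than mathematics: one must verify that the residues at the two poles $x=yu_1$ and $x=yu_2$ inside the unit circle have indeed been computed correctly (including checking that these are the only poles of the integrand inside $|x|=1$, which uses $|u_j|=1$ is not the relevant regime — rather $|u_j|$ is taken small, or more precisely the $u_j$ are formal parameters and the torus integral is interpreted as extracting the constant term, so the poles at $x=yu_j$ are the ones enclosed), and that the subsequent integration in $y$ contributes nothing new because the resulting expression is already independent of $y$. One should also note that the symmetry $u_1\leftrightarrow u_2$ in the final answer is a consistency check: the two residue contributions are exchanged by this swap, so their sum must be symmetric, as indeed $\frac{2}{(1-u_1u_2)(1-u_1)(1-u_2)}$ is. With these checks in place the theorem follows immediately.
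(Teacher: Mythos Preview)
Your proposal is correct and follows the paper's approach exactly: the paper computes the two residues, then simply says ``Adding these two fractions and multiplying back the $(1-t_1)^2(1-t_2)^2$ we factored out we get $T(1,1;2,2)$,'' and your argument carries out precisely this addition and restores the $t$-factor. The only difference is that you make the elementary algebra explicit, which the paper leaves to the reader.
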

It follows from the identities $\sum S_\lambda (u_1,u_2)=(1-u_1)^{-1}(1-u_2)^{-1}(1-u_1u_2)^{-1}$ and $\sum (\lm_1-\lm_2+1)S_\lambda (t_1,t_2)=(1-t_1)^{-2}(1-t_2)^{-2}$ that if $\lm$ and typical with $\alpha(\lm)=(\alpha_1,\alpha_2)$ and $\beta(\lm)=(\beta_1,\beta_2)$, then $m_\lm=2(\alpha_1-\alpha_2+1)$, in agreement with~\cite{remmel}.

In order to find $m_\lm$ for smaller $\lm$, note that every $\lm\in H(1,1)$ is typical, except
for $\lm=[0]$, and so $m_\lm=m_\lm'$ for all $\lm$ except for $m_{[0]}=1$ and $m_{[0]}'=0$.
Hence, we may use $P'(1,1;1,1)$ to compute $m_\lm$ in the $(1,1)$-hook.  By Theorem~\ref{th:2.4}, we compute $P'(1,1;1,1)$ by integrating
\begin{equation}\frac{(1+u)^2(1+\frac xyt)(1+\frac yxt)}{(1+\frac xy)(1+\frac yx)(1-t)^2
(1-\frac xy u)(1-\frac yx u)}\frac{dx}x\wedge\frac{dy}y\label{eq:6}
\end{equation}
There are two poles:  One at $x=yu$ and one at $x=-y$.  The former has residue
$$\frac{(1+u)^2(1+ut)(1+\frac tu)}{(1+u)(1+u^{-1})(1-t)^2(1-u^2)}$$ which equals
$$\frac{(1+ut)((t+u)}{(1-t)^2(1-u^2)}$$
times $\frac{dy}y$.  The pole in \eqref{eq:6} at $x=-y$ is of order two, so in order to compute the residue
we must first multiply by $(x+y)^2$, then take the partial derivative with respect to~$x$, and
finally substitute $-y$ for~$x$.  The computation is a bit long and the result is~0.
\begin{thm} $P'(1,1;1,1)=(1+ut)(t+u)(1-t)^{-2}(1-u^2)^{-1}$.\end{thm}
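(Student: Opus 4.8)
The plan is to apply Theorem~\ref{th:2.4}, which presents $P'(1,1;1,1)$ as the (suitably normalized) torus integral of the two‑variable form~\eqref{eq:6}: here $k=\ell=a=b=1$, so $Z_0=\{1,1\}$, $Z_1=\{x/y,\,y/x\}$, and the factors $\prod_{i\ne j}(1-x_ix_j^{-1})$ are empty. We evaluate the iterated integral by residues, integrating in $x$ first with $y$ held on its contour.

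First I would locate the poles of~\eqref{eq:6}, viewed as a function of $x$, lying inside the $x$‑contour: a simple pole at $x=yu$ coming from the factor $(1-\tfrac yx u)^{-1}$, and a double pole at $x=-y$ coming from the product $(1+\tfrac xy)^{-1}(1+\tfrac yx)^{-1}$, whose two factors share the zero $x=-y$. (The remaining denominator factor $(1-\tfrac xy u)^{-1}$ has its pole at $x=y/u$, outside.) The residue at the simple pole $x=yu$ is obtained by direct substitution; after the cancellations it reduces to
$$\frac{(1+ut)(t+u)}{(1-t)^2(1-u^2)},$$
and — the point that makes the rest of the argument trivial — this expression does not involve $y$.

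The bulk of the work is the double pole at $x=-y$. After clearing denominators, write the $x$‑integrand times $(x+y)^2$ as $c\cdot\dfrac{N(x)}{D(x)}$ with $N(x)=(y+xt)(x+yt)$, $D(x)=(y-xu)(x-yu)$ and $c$ independent of $x$; the contribution of this pole is then $c\,\dfrac{N'D-ND'}{D^2}\Big|_{x=-y}$. The key observation is that the evaluations of $N,N',D,D'$ at $x=-y$ are each a monomial in $y$ times $(1-t)^2$ (for $N,N'$) or $(1+u)^2$ (for $D,D'$), arranged exactly so that the two terms of $N'D-ND'$ cancel at $x=-y$; hence this pole contributes nothing. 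I expect this second‑order residue, and in particular verifying the cancellation $N'D-ND'\big|_{x=-y}=0$, to be the only real step in the proof — everything else is bookkeeping.

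Finally, since the surviving $x$‑residue is constant in $y$, the outer $y$‑integral just returns that constant. Assembling the two contributions gives $P'(1,1;1,1)=(1+ut)(t+u)(1-t)^{-2}(1-u^2)^{-1}$, as claimed.
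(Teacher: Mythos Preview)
Your proposal is correct and follows the paper's argument essentially line for line: apply Theorem~\ref{th:2.4} to obtain the integrand~\eqref{eq:6}, locate the simple pole at $x=yu$ and the double pole at $x=-y$, compute the former to get $(1+ut)(t+u)(1-t)^{-2}(1-u^2)^{-1}$, show the latter contributes~$0$, and observe that the $y$--integral is trivial because the surviving residue is independent of~$y$. Your explicit factorization $N(-y)=-y^2(1-t)^2$, $N'(-y)=y(1-t)^2$, $D(-y)=-y^2(1+u)^2$, $D'(-y)=y(1+u)^2$ and the resulting cancellation $N'D-ND'\big|_{x=-y}=0$ is exactly the ``bit long'' computation the paper alludes to but does not write out.
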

It follows that if $\lm=(a+1,1^b)$, then $m_\lm$ equals $a+1$ if $b$ is even and $a$ if $b$ is odd.

A similar analysis can be carried out for $\bar{m}_\lm$.  We leave the proof to the reader.
\begin{thm} $\bar{P}'(1,1;1,1)=1+2(t+u)(1+tu)(1-t)^{-2}(1-u)^{-1}$ which implies that $\bar{m}_{[a+1,1^b]}$ equals $4a+2$ if $b>0$ and $\bar{m}_{[a+1]}=2a+2$.
Also,
\begin{align*}\bar{P}'(1,1;2,2)&=\frac{2(3+u_1+u_2-u_1u_2)}{(1-t_1)^2(1-t_2)^2(1-u_2)(1-u_2)(1-u_1u_2)}\\ &=\frac1{(1-t_1)^2(1-t_2)^2}\left(\frac8{(1-u_2)(1-u_2)(1-u_1u_2)}-\frac2{1-u_1u_2}
\right)
\end{align*}
It follows that if $\lm$ is typical with $\alpha(\lm)=(\alpha_1,\alpha_2)$ and $\beta(\lm)=(\beta_1,\beta_2)$, then $\bar{m}_\lm=8(\alpha_1-\alpha_2+1)$ unless $\beta_1=\beta_2$ in which case $\bar{m}_\lm=6(\alpha_1-\alpha_2+1)$.
\end{thm}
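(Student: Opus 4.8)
The plan is to derive $\bar{P}'(1,1;1,1)$ directly from Theorem~\ref{th:2.4a}, exactly in parallel with the computation of $P'(1,1;1,1)$ that precedes it, and then to read off the $\bar{m}_\lm$ by expanding the resulting rational function into hook Schur functions. First I would specialize Theorem~\ref{th:2.4a} to $k=\ell=1$, $a=b=1$: here $X=\{x\}$, $Y=\{y\}$, so $Z_0=\{1,1\}$ and $Z_1=\{x/y,y/x\}$, and the extra factor is $\sum_{z\in Z_0\cup Z_1}z = 2 + x/y + y/x$. Canceling one copy of $(1+x/y)(1+y/x)$ from the denominator against the $\prod(1+z_0b)\prod(1+z_1a)$-type numerator factors, the integrand becomes $(2+x/y+y/x)$ times the integrand already displayed in~\eqref{eq:6}. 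So $\bar{P}'(1,1;1,1)$ is the integral over the torus $|x|=|y|=1$ of
$$
\bigl(2+\tfrac xy+\tfrac yx\bigr)\,\frac{(1+u)^2(1+\tfrac xy t)(1+\tfrac yx t)}{(1+\tfrac xy)(1+\tfrac yx)(1-t)^2(1-\tfrac xy u)(1-\tfrac yx u)}\,\frac{dx}x\wedge\frac{dy}y .
$$

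Next I would do the $x$-integration by residues. As in the preceding computation the poles inside $|x|=1$ are at $x=yu$ (simple) and at $x=-y$ (double, coming from the surviving $(1+x/y)$ in the denominator). At $x=yu$ the new factor contributes the scalar $2+u+u^{-1}=(1+u)^2/u$, so the residue there is $(1+u)^2/u$ times the residue already computed in the $P'$ case, namely $(1+u)^2/u \cdot (1+ut)(t+u)(1-t)^{-2}(1-u^2)^{-1}$; after the $y$-integration (which is trivial, the integrand being already a function of $y$ of total degree $0$) this gives a clean contribution. The double pole at $x=-y$ requires multiplying by $(x+y)^2$, differentiating in $x$, and setting $x=-y$; in the $P'$ computation this residue vanished, but here the extra factor $2+x/y+y/x$ does not vanish at $x=-y$ (it equals $2-1-1=0$ — in fact it \emph{does} vanish), so one must also account for its derivative. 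This is the one genuinely non-routine bookkeeping step: because $2+x/y+y/x = (x+y)^2/(xy)$ has a double zero at $x=-y$, it raises the order of vanishing enough that, combined with the double pole, the net contribution at $x=-y$ is again finite and must be computed carefully — I expect it to contribute the constant term $1$ in the final answer (matching the $m_{[0]}$-type correction). After assembling, simplifying, and integrating in $y$, I would obtain $\bar{P}'(1,1;1,1)=1+2(t+u)(1+tu)(1-t)^{-2}(1-u)^{-1}$.

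For the extraction of multiplicities: by definition $\bar{P}'(1,1;1,1)=\sum \bar{m}_\lm' HS_\lm(t;u)$ over $\lm\in H(1,1)$, and $\bar{m}_\lm'=\bar{m}_\lm$ for large $\lm$, which here means all $\lm$ except $\lm=[0]$. Using the Factorization Theorem for the typical $\lm=[a+1,1^b]\in H(1,1)$ one has $HS_{[a+1,1^b]}(t;u)=(t+u)\,t^a u^b$, and $HS_{[0]}(t;u)=1$; matching the coefficient of $t^a u^b$ on both sides of $1+2(t+u)(1+tu)\sum_{a\ge0}(a+1)t^a\sum_{b\ge0}u^b$ after clearing the $(t+u)$ factor yields $\bar{m}_{[a+1,1^b]}$ depending only on the parity of $b$ via the $(1+tu)$ factor, giving $4a+2$ for $b>0$ and $2a+2$ for $b=0$, and the lone constant term $1$ accounts for the discrepancy $\bar{m}_{[0]}'=0$ versus the $1b$-value (which plays no role in $\bar{P}'$). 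The identical strategy — now integrating the integrand of Theorem~\ref{th:2.1a} with two $t$'s and two $u$'s, i.e.\ the integrand of Theorem~\ref{th:3.3}'s computation multiplied by $2+x/y+y/x$ — produces $\bar{T}(1,1;2,2)$; the residue computation is the same as in the proof preceding Theorem~\ref{th:3.3} with the extra scalar $(1+u)^2/u$ at the relevant pole, and partial-fractioning the answer against $\sum S_\beta(u_1,u_2)=(1-u_1)^{-1}(1-u_2)^{-1}(1-u_1u_2)^{-1}$ and $\sum(\alpha_1-\alpha_2+1)S_\alpha(t_1,t_2)=(1-t_1)^{-2}(1-t_2)^{-2}$ gives $\bar{m}_\lm=8(\alpha_1-\alpha_2+1)$ in general and $6(\alpha_1-\alpha_2+1)$ when $\beta_1=\beta_2$ (this last case is exactly the $-2/(1-u_1u_2)$ correction term in the stated partial-fraction form). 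The main obstacle throughout is the careful evaluation of the higher-order pole at $x=-y$, where the numerator factor $2+x/y+y/x$ itself vanishes to order two, so the residue is a limit of a $0/0$ expression that one must resolve by a Taylor expansion in $x+y$ rather than a single differentiation; everything else is the same Cauchy-identity and residue machinery already used above.
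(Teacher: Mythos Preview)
The paper leaves this proof to the reader, so your overall strategy---specialize Theorems~\ref{th:2.4a} and~\ref{th:2.1a} to $k=\ell=1$ and evaluate the resulting torus integrals by residues, exactly as was done for $P'(1,1;1,1)$ and $T(1,1;2,2)$---is the intended one. But your pole analysis contains a concrete error that would derail the computation.

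You correctly observe that the extra factor is $2+x/y+y/x=(x+y)^2/(xy)$, but you then treat the point $x=-y$ as a ``$0/0$'' that must be resolved by Taylor expansion. In fact $(x+y)^2/(xy)$ is \emph{exactly} $(1+x/y)(1+y/x)$, the very factor sitting in the denominator of~\eqref{eq:6}. So the double zero cancels the double pole completely: there is no pole at $x=-y$, no limit to take, and the contribution there is simply zero---it cannot supply the constant~$1$ you are hoping for. What the cancellation \emph{does} do is introduce a new simple pole at $x=0$, coming from the surviving $1/(xy)$; this pole was absent in the $P'$ integrand (where the numerator and denominator balanced at $x=0$) and you do not mention it. After the cancellation the $\bar P'(1,1;1,1)$ integrand is just
\[
\frac{(1+u)^2(1+\tfrac{x}{y}t)(1+\tfrac{y}{x}t)}{(1-t)^2(1-\tfrac{x}{y}u)(1-\tfrac{y}{x}u)}\,\frac{dx}{x}\wedge\frac{dy}{y},
\]
with simple poles inside the torus at $x=0$ and $x=yu$. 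Summing those two residues gives $\dfrac{(1+u)(1+2tu+t^2)}{(1-t)^2(1-u)}$, which rewrites as $1+2(t+u)(1+tu)(1-t)^{-2}(1-u)^{-1}$. The same simplification (one extra factor of $(1+x/y)(1+y/x)$ in the numerator) makes the $\bar T(1,1;2,2)$ computation routine as well. One small slip in your extraction step: the $(1+tu)(1-u)^{-1}$ factor does not introduce a dependence on the \emph{parity} of $b$ (that was $(1-u^2)^{-1}$ in the $P'$ case); it distinguishes $b=0$ from $b>0$, which is what the stated values $2a+2$ and $4a+2$ reflect.
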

\subsection{The case of $(k,\ell)=(2,1)$}
We conclude with a peek into the unknown, thanks to a Maple computation:
\begin{equation}T(2,1;1,0)=\frac{372+801t+835t^2+515t^3+213t^4+35t^5+t^6}{(1-t)^2(1-t^2)},\end{equation}
\begin{multline}T(2,1;0,1)=(1-u)^{-1}(372+780u+1083u^2+1193u^3+1034u^4\\ +754u^5+513u^6+319u^7+158u^8+54u^9+11u^{10}+u^{11}),\end{multline}
\begin{equation} \bar{T}(2,1;1,0)=\frac{2697++6346t+6641t^2+4449t^3+1981t^4+503t^5+50t^6+t^7}{(1-t)^2(1-t^2)},\end{equation}
and
\begin{multline}\bar{T}(2,1;0,1)=(1-u)^{-1}(2697+6249u+8817u^2+9587u^3+8706u^4\\
+6890u^5+4877u^6+3107u^7+1744u^8+820u^9+301u^{10}\\ +79u^{11}+13u^{12}+u^{13})
\end{multline}

 \end{document}